\newtheorem{remark}{Remark}[section]
\newenvironment{proof}{\begin{trivlist}
                       \item[]{\bf Proof.}
                       \hspace{0cm}}{\hfill $\Box$
                       \end{trivlist}}
                     {\end{enumerate}}
\newcounter{statement}
\newcounter{algorithm}
\newcounter{secalgorithm}[section]
\newcounter{subalgorithm}[subsection]
{\catcode`\;=\active%
\gdef\pointvirgule{\catcode`\;=\active%
\def;{\unskip\kern 2pt\string;~~~\ignorespaces}}}
\def\refstepalgorithms{\stepcounter{subalgorithm}\stepcounter{secalgorithm}
\refstepcounter{algorithm}}
{\end{tabbing}}
\newenvironment{algorithm*}[1]{\pointvirgule\setcounter{statement}{0}
\def\N>{\stepcounter{statement}\thestatement.\>}\def\B>{\hskip0.1em\vrule\>}
\def\NN>{\stepcounter{statement}\ifnum\thestatement<10\phantom{0}\fi
\thestatement.\>}\def\[##1]{{\bf##1}}
\begin{tabbing}
++\=++\=++\=++\=++\=++\=++\=++\=++\=++\=++\=++\=++\=++\kill
{\sc Algorithm}: {#1}\\*[1mm]}%
{\end{tabbing}}
\newenvironment{*algorithm*}{\pointvirgule\setcounter{statement}{0}
\def\N>{\stepcounter{statement}\thestatement.\>}\def\B>{\hskip0.1em\vrule\>}
\def\NN>{\stepcounter{statement}\ifnum\thestatement<10\phantom{0}\fi
\thestatement.\>}\def\[##1]{{\bf##1}}
\begin{tabbing}
++\=++\=++\=++\=++\=++\=++\=++\=++\=++\=++\=++\=++\=++\kill
}{\end{tabbing}}
\newenvironment{ualgorithm*}[1]{\pointvirgule\setcounter{statement}{0}
\def\N>{\stepcounter{statement}\thestatement.\>}\def\B>{\hskip0.1em\vrule\>}
\def\NN>{\stepcounter{statement}\ifnum\thestatement<10\phantom{0}\fi
\thestatement.\>}\def\[##1]{{\bf##1}}
\begin{tabbing}
++\=++\=++\=++\=++\=++\=++\=++\=++\=++\=++\=++\=++\=++\kill
\underline{{\sc Algorithm}}: {#1}\\*[1mm]}%
{\end{tabbing}}
\def\@comment[#1]#2{$\{$\hbox to #1{#2 \dotfill}$\}$}
\def\comment{\@ifnextchar[{\@comment}{\@comment[10.5cm]}}
\def\Span{\mathop{\rm Span}}
\def\eps{\ifmmode\epsilon\else$\epsilon$\fi}
\def\veps{\ifmmode\varepsilon\else$\varepsilon$\fi}
\def\IR{\mbox{\bf R}} %\def\IR{\mbox{\ens R}}
\def\IR{\mbox{\rm I\hspace{-.15em}R}}
\def\adots{\mathinner{\mkern2mu\raise1pt\hbox{.}
\mkern3mu\raise4pt\hbox{.}\mkern1mu\raise7pt\hbox{.}}}
\def\og{\leavevmode\raise.3ex\hbox{$\scriptscriptstyle\langle\!\langle$}}
\def\fg{\leavevmode\raise.3ex\hbox{$\scriptscriptstyle\rangle\!\rangle$}}
\def\romain#1{\uppercase\expandafter{\romannumeral#1}}
\def\trait{\hskip 0.5mm \raise -6pt\vbox{\vrule height 18pt}\>}
\def\traitl{\hskip 0.5mm \raise -11pt\vbox{\vrule height 25pt}\>}
\def\hfl#1#2{\smash{\mathop{\hbox to 20mm {\rightarrowfill}}
\limits^{\scriptstyle #1}_{\scriptstyle #2}}}
\def\SS#1#2{{#1}\kern .15em{#2}}
\def\into#1{\S\kern .15em\ref{#1}}
\def\@@hcaption[#1]#2#3{\par\@bsphack\vbox{\def\@captype{#1}\caption{#3}}
{\if@filesw{\xdef\@gtempa{\write\@auxout{\string\newlabel{#2}
{{\csname p@#1\endcsname\csname the#1\endcsname}{\thepage}}}}}}
\@gtempa\if@nobreak\ifvmode\nobreak\fi\fi\fi\@esphack}
\def\@hcaption#1#2{\ifnum\@hcaptype=0\@@hcaption[figure]{#1}{#2}
%XXXrbs obsoleted by Latex2e, don't wrap the 'else [table]' with csname
%\else\@@hcaption[table]{\csname#1\endcsname}{\csname#2\endcsname}\fi}
\else\@@hcaption[table]{#1}{#2}\fi}
\def\hcaption{\@ifnextchar[{\@@hcaption}{\@hcaption}}
{\begin{trivlist}\@hcaptype=0\centering\item[]}%
{\end{trivlist}}
{\begin{trivlist}\@hcaptype=1\centering\item[]}%
{\end{trivlist}}
\newtheorem{thm}{Theorem}[section]
\newtheorem{lem}[thm]{Lemma}
\newtheorem{defn}{Definition}[section]
\newtheorem{rem}{Remark}[section]
\def\IR{\mbox{\rm I\hspace{-.15em}R}}
\title{Functionally-fitted explicit pseudo two-step Runge-Kutta-Nystr\"{o}m methods}
\author{N. S. Hoang\footnotemark[1] 
%\and R. B. Sidje\footnotemark[2]
}
\date{}
\begin{document}
\maketitle
\renewcommand{\thefootnote}{\fnsymbol{footnote}}
\footnotetext[1]{
Department of Mathematics, University of West Georgia, nhoang@westga.edu}
%\footnotetext[2]{
%Department of Mathematics, University of Alabama,}

\begin{abstract}
\noindent
A general class of functionally-fitted explicit pseudo two-step Runge-Kutta-Nystr\"{o}m (FEPTRKN) methods for solving second-order initial value problems
%$y' = f(t,y)$, $y(0) = y0$, $y'(0)=y'_0$  
has been studied. These methods can be considered generalized explicit pseudo two-step Runge-Kutta-Nystr\"{o}m (EPTRKN) methods. 
We proved that an $s$-stage FEPTRKN method has step order $p=s$ and stage order $r=s$ for any set of distinct collocation parameters $(c_i)_{i=1}^s$. Supperconvergence for the accuracy orders of these methods can be obtained if the collocation parameters $(c_i)_{i=1}^s$ satisfy some orthogonality conditions. We proved that an $s$-stage FEPTRKN method can attain accuracy order $p=s+3$. Numerical experiments have shown that the new FEPTRKN methods work better than do EPTRKN methods on problems whose solutions can be well approximated by the functions in bases on which these FEPTRKN methods are developed. 

{\bf Keywords.}
functionally-fitted, generalized collocation, variable coefficients, 
two-step explicit RKN, nonstiff ODEs.
\end{abstract}

\section{Introduction}

Consider the initial value problem
\begin{equation}
\label{sys}
y''(t) = f(t,y(t)),\quad
y(0) = y_0,\quad y'(0) = y'_0,\quad
t\in [t_0,\, t_0+T]
\end{equation}
where $f:[t_0,t_0+T]\times \IR\to \IR$ and $f(t,y)$ is continuous with respect to $t$ and satisfies a 
Lipschitz condition with respect to $y$.
For simplicity of presentation we state equation \eqref{sys} in scalar form. However, 
with a change of notation, the discussion also holds when equation \eqref{sys} is in vector form. One of the approaches to solve numerically equation \eqref{sys} is to rewrite the 
equation as a system of first-order ODEs and use numerical methods to solve this system. There are many methods for solving systems of first-order ODEs (see, e.g., \cite{Burrage}, \cite{Hairer}). 
%Examples include, but are not limited to, multistep methods, Runge-Kutta methods, and multistage and multistep methods (see, e.g., ). 
The disadvantage of this approach is that the sizes of the obtained systems are twice as large as the sizes of the orignal systems. 
Also, this approach does not take advantage of the special form of equation \eqref{sys}. 
It has been of interest to develop methods to solve equation \eqref{sys} without rewriting it as a system of first-order ODEs. 

Numerical methods for solving equation \eqref{sys} directly have been developed extensively in the literature (see, e.g., \cite{Coleman}, \cite{cong99},\cite{cong00}, \cite{cong01}, and \cite{cong91}). Among direct methods for solving \eqref{sys}, Runge-Kutta-Nystr\"{o}m (RKN) methods are preferable.  
An $s$-stage RKN method is defined by its Butcher-tableau as follows
$$
\renewcommand{\arraystretch}{1.2}
\begin{array}{c|c}
\bm{c} & \bm{A} \\ \hline
        & \bm{b}^T\\
        & \bm{d}^T
\end{array},~~
\bm{A} = [a_{ij}] \in \IR^{s\times s},~~ 
\bm{b} = (b_1, ..., b_s)^T,~~ 
\bm{d} = (d_1, ..., d_s)^T,~~
\bm{c} = (c_1, ..., c_s)^T. 
$$ 
Given $y_{n}$ and $y_n'$, the approximations of $y(t_n)$ and $y'(t_n)$ at the $n$-th step, 
the approximations of $y(t_{n+1})$ and $y'(t_{n+1})$ at the $(n+1)$-th step 
defined by the $s$-stage RKN method $(\bm{c},\bm{A},\bm{b},\bm{d})$ are
\begin{align}
y_{n+1} &= y_n + hy'_n +
h^2\sum_{j=1}^{s}b_jf(t_n+c_jh,Y_{n,j}), \label{RKNiterate}\\
y'_{n+1} &= y'_n + 
h\sum_{j=1}^{s}d_jf(t_n+c_jh,Y_{n,j}),\\
Y_{n,i} &= y_n + c_ihy'_n+
h^2\sum_{j=1}^{s}a_{ij}f(t_n+c_jh,Y_{n,j}),\qquad i=1, ..., s\label{RKNstage}.
\end{align}

For an implicit RKN method, the matrix $\bm{A}$ is nonsingular and 
\eqref{RKNstage}
is a system of nonlinear equations. In this case it requires special techniques such as 
Newton methods or fixed-point iterations to solve equation \eqref{RKNstage}.
For an explicit RKN method, $\bm{A}$ is strictly 
lower-triangular and $c_1=0$. 
The stage values $(Y_{n,i})_{i=1}^s$ can be easily computed from
\begin{equation}
\label{explicitRKstage}
Y_{n,1} = y_n,\quad Y_{n,i} = y_n + hc_iy'_n +
h^2\sum_{j=1}^{i-1}a_{ij}f(t_n+c_jh,Y_{n,j}),\quad i=2, ..., s.
\end{equation}

Recently, much work has been devoted to the study of functionally-fitted methods. These methods are developed to integrate an ODE exactly if the solution of the ODE is a linear combination of some certain basis functions (see, e.g., \cite{Coleman}, \cite{Ambrosio}, \cite{Franco2}, \cite{nguyen2}, \cite{nguyen3}, \cite{nguyenfeptrk}, \cite{simos}). For example, trigonometric methods have been developed to solve periodic or nearly periodic problems (see, e.g., \cite{Ambrosio}, \cite{nguyen1}, \cite{simos}). Numerical experiments have shown that trigonometrically-fitted methods 
are superior to classical Runge-Kutta methods for solving ODEs whose solutions are periodic or nearly periodic functions with known frequencies (see, e.g., \cite{Franco2}, \cite{nguyen1}, \cite{nguyen2}, \cite{nguyen3}, \cite{nguyenfeptrk}, \cite{Ozawa2}, \cite{simos}). 

In \cite{nguyenfeptrk}, a general class of functionally-fitted explicit pseudo two-step Runge-Kutta (FEPTRK) methods was developed. These methods can be considered generalized explicit pseudo two-step Runge-Kutta (EPTRK) methods. 
Although EPTRK methods were originally developed for parallel computers (cf. \cite{congeptrk99}), it was shown in \cite{nguyenfeptrk} that (F)EPTRK methods work better than one of the most frequently used Runge-Kutta methods the DOPRI45 method even in sequential computing environments on non-stiff problems. 
In \cite{cong99}, a class of explicit pseudo two-step Runge-Kutta-Nystr\"{o}m (EPTRKN) methods for solving non-stiff problem \eqref{sys} was developed. 
In particular, it was proved in \cite{cong99} that an $s$-stage EPTRKN method has step order $p=s$ for any set of collocation points $(c_i)_{i=1}^s$. 
For super-convergence it was shown in \cite{cong99} that an $s$-stage EPTRKN can attain step accuracy order up to $p=s+2$ if the collocation parameters $(c_i)_{i=1}^s$ satisfy some orthogonality conditions. The accuracy study in \cite{cong99} was done by using Taylor series expansion technique. 
It was shown in \cite{cong00} that these methods were more efficient than some of the most commonly used methods for solving non-stiff large-scale systems of ODEs. When solving non-stiff problems it is the accuracy not the stability that controls the stepsizes of numerical methods. 

In this paper, we study a general class of functionally-fitted explicit pseudo Runge-Kutta-Nystr\"{o}m (FEPTRKN) methods. 
Similar to FEPTRK methods in \cite{nguyenfeptrk}, these methods have the advantage of integrating an ODE exactly if the solution of the ODE is a linear combination of functions in bases on which the FEPTRKN methods are developed. Using a similar collocation framework as in \cite{nguyenfeptrk}, we have shown in this paper that an $s$-stage FEPTRKN method has stage order $p=s$ for any set of collocation parameters $(c_i)_{i=1}^s$. 
When the basis functions are polynomials, we recover the results for EPTRKN methods in \cite{cong99}. 
Moreover, we proved in this paper that an $s$-stage FEPTRKN can attain step order $p=s+3$. A consequence of this result is that an $s$-stage EPTRKN method can have step order $p=s+3$. This superconvergence result is new and allows us to construct methods of higher accuracy orders than those in \cite{cong99,cong01} for a given number of stage $s$. 
Using this superconvergence result, we have constructed $4$-stage, $5$-stage, and $6$-stage FEPTRKN methods having accuracy order $7$, $8$, and $9$, respectively. 
Numerical experiments with the new methods have shown that these methods attain accuracy orders as indicated by the theory. Moreover, these FEPTRKN methods have performed much better than the corresponding EPTRKN methods when the basis functions are suitably chosen. 

\section{Explicit pseudo two-step RKN (EPTRKN) methods}
\label{sec:eptrk}

The iteration scheme
of a conventional RKN method based on
\eqref{RKNiterate}--\eqref{RKNstage} can be represented
compactly as
\begin{subequations}
\begin{align}
\label{eq6a}
\bm{Y}_n &= \bm{e}y_n+h\bm{c}y'_n+h^2\bm{A}f(\bm{e}t_n+\bm{c}h,\bm{Y}_n) \in \IR^s,\\
y_{n+1} &= y_n + hy'_n+ h^2\bm{b}^Tf(\bm{e}t_n+\bm{c}h,\bm{Y}_n) \in \IR,\\
y'_{n+1} &= y'_n + h\bm{d}^Tf(\bm{e}t_n+\bm{c}h,\bm{Y}_n) \in \IR,
\end{align}
\end{subequations}
where 
$\bm{Y}_n:=(Y_{n,1},...,Y_{n,s})^T$ and 
$f(\bm{e}t_n+\bm{c}h,\bm{Y}_n) := 
(f(t_n+c_1h,Y_1), ..., f(t_n+c_sh,Y_s))^T$. For implicit RKN methods, one has to solve nonlinear equation \eqref{eq6a} for the stage vector $\bm{Y}_n$ and this requires extra computation time. It should be noted that all classical collocation RKN methods are implicit (cf. \cite{cong91}). It is well known that implicit RK(N) methods should only be used for solving stiff problems. 
For non-stiff problems, explicit methods are computationally cheaper as the stage vector $\bm{Y}_n$ can be computed easily.

In \cite{cong99}  Cong defined 
the iteration scheme of an $s$-stage explicit pseudo two-step RKN (EPTRKN) method as 
\begin{subequations}
\label{EPTRKN}
\begin{align}
y_{n+1} &= y_{n} + hy'_n+ h^2\bm{b}^Tf(\bm{e}t_{n}+\bm{c}h,\bm{Y}_{n}) \in \IR,\\
y'_{n+1} &= y'_{n} + h\bm{d}^Tf(\bm{e}t_{n}+\bm{c}h,\bm{Y}_{n}) \in \IR,\\
\label{EPTRKNc}
\bm{Y}_{n+1} &= \bm{e}y_{n+1} + h\bm{c}y'_{n+1}+h^2\bm{A}f(\bm{e}t_{n}+\bm{c}h,\bm{Y}_{n}) \in \IR^s,
\end{align}
\end{subequations}
where, again,  
$y_n\approx y(t_n)$, $y'_n\approx y'(t_n)$, and 
$\bm{Y}_n=(Y_{n,1},...,Y_{n,s})^T\approx y(\bm{e}t_n+\bm{c}h)=(y(t_n+c_1h),...,y(t_n+c_sh))^T$. 
The advantage of EPTRKN methods over implicit RKN methods is: the stage vector $\bm{Y}_{n+1}$ in equation \eqref{EPTRKNc} can be computed explicitly using the values of $y_n$, $y_n'$, and $\bm{Y}_{n}$ from the previous step. 
The scheme needs $s$ sufficiently accurate starting values 
to define $\bm{Y}_0$, but these can be obtained by a conventional method. 
Since the components of $f(\bm{e}t_{n}+\bm{c}h,\bm{Y}_{n})$ can be
evaluated independently in parallel computing environments, EPTRKN methods are ideally suited 
for parallel computers. Note that on parallel computing environments, EPTRKN methods use only one function evaluation of $f(t_{n}+c_ih,Y_{n,i})$ per step. 

\section{Functionally-fitted explicit pseudo two-step RKN (FEPTRKN) methods}
\label{sec:feptrkn}

Let us first give a definition of functionally-fitted explicit pseudo two-step RKN (FEPTRKN) methods. 
This requires choosing a set of basis functions 
$\{u_i(t)\}_{i=1}^{s}$ that are sufficiently smooth and linearly 
independent and satisfy the integration scheme 
\eqref{EPTRKN} exactly.

\begin{defn}[Functionally-fitted EPTRKN]
An s-stage EPTRKN method is a functionally-fitted (or generalized 
collocation) explicit pseudo two-step RKN (FEPTRKN) method with respect to the basis functions 
$\{u_i(t)\}_{i=1}^s$ if the following relations are satisfied for all 
$i=1,...,s$:
\begin{equation}
\label{FEPTRKNdef}
\begin{split}
u_i(t+h)&=u_i(t) + hu_i'(t) + h^2\bm{b}^T(t,h)u''_i(\bm{e}t+\bm{c}h),\\
u'_i(t+h)&=u'_i(t) + h\bm{d}^T(t,h)u''_i(\bm{e}t+\bm{c}h),\\
u_i(\bm{e}t+\bm{c}h+\bm{e}h)&=\bm{e}u_i(t+h)+h\bm{c}u'_i(t+h) + h^2\bm{A}(t,h)u''_i(\bm{e}t+\bm{c}h).
\end{split}
\end{equation}
\end{defn}
Here, $\bm{b}^T(t,h)$ and $\bm{d}^T(t,h)$ denote the transposes of $\bm{b}(t,h)$ and $\bm{d}(t,h)$, respectively. 
This immediately yields linear systems to solve for $\bm{A}$, $\bm{b}$, and $\bm{d}$. These  coefficients generally
depend on both $t$ and $h$. The parameters $(c_i)_{i = 1}^s$ are distinct and will be chosen later. 
By construction, FEPTRKN methods are explicit.

\subsection{Collocation condition}

It is clear that not all bases of functions $\{u_i(t)\}_{i=1}^s$ satisfy 
\eqref{FEPTRKNdef}, i.e., 
there are some sets of functions $\{u_i(t)\}_{i=1}^s$ for which one cannot find 
$\bm{A}$, $\bm{b}$, and $\bm{d}$ 
so that condition \eqref{FEPTRKNdef} holds for these $\{u_i(t)\}_{i=1}^s$. The {\it collocation condition} defined below 
guarantees the existence of $\bm{A}$, $\bm{b}$ and $\bm{d}$ satisfying condition \eqref{FEPTRKNdef}, and, therefore, guarantees the existence of  
the corresponding FEPTRKN method.

\begin{defn}[Collocation condition]
A set of sufficiently smooth functions $\{u_1(t),u_2(t),...,$ $u_s(t)\}$ 
is said to satisfy the collocation condition if for any given $t\in [0,T)$, the matrix 
\begin{align*}
\bm{F}(t,h)&:=
\bigg{(}u''_{1}(\bm{e}t+\bm{c}h),
u''_{2}(\bm{e}t+\bm{c}h),...,
u''_{s}(\bm{e}t+\bm{c}h)\bigg{)}
\end{align*}
is nonsingular for almost every $h$ in the interval $[0,h_{\max}]$, $h_{\max} = const>0$.
\end{defn}

\begin{rem}\label{sysAb}{\rm
Equations in \eqref{FEPTRKNdef} can be written as
\begin{equation}
\label{sysABeq}
\begin{split}
\Big{(}u_{1}(t+h)-u_1(t)-hu'_1(t), ..., u_{s}(t+h)-u_s(t)-hu'_s(t)\Big{)} 
&= h^2\bm{b}^T(t, h)\bm{F}(t, h), \\
\Big{(}u'_{1}(t+h)-u'_1(t), ..., u'_{s}(t+h)-u'_s(t)\Big{)} 
&= h\bm{d}^T(t, h)\bm{F}(t, h), \\
\Big{(}\bm{v}_1(t,h), ...,
\bm{v}_s(t,h)\Big{)} 
&= h^2\bm{A}(t, h)\bm{F}(t, h),
\end{split}
\end{equation}
where
$$
\bm{v}_i(t,h):=u_{i}(\bm{e}t +\bm{e}h+\bm{c}h)-\bm{e}u_i(t+h)-h\bm{c}u'_i(t+h),\qquad i=1,...,s.
$$
Hence,  the existence of the coefficients $\bm{A}(t,h)$, $\bm{d}(t,h)$, $\bm{b}(t,h)$ at $t\in [t_0,t_0+T]$  and $h>0$ 
is guaranteed if $\bm{F}(t,h)$ is nonsingular. }
\end{rem}

\begin{rem}{\rm 
If $\{u_i(t)\}_{i=1}^s$ satisfies the collocation condition, then the linear function $\alpha + \beta t$ is not in the linear space $\Span\{u_1,u_2,...,u_s\}$ for any constants $\alpha$ and $\beta$. In other words, there do not exist constants $(\alpha_i)_{i=1}^s$ so that $\alpha_1u_1(t)+ \alpha_2u_2(t)+\cdots+\alpha_su_s(t) =\alpha + \beta t$ for some constants $\alpha$ and $\beta$. 
In particular, none of the functions $\{u_i(t)\}_{i=1}^s$ is a linear function. 
}
\end{rem}

\begin{rem}\rm 
When $\{u_1(t), u_2(t), ..., u_s(t)\}=\{t^2,t^3, ..., t^{s+1}\}$, 
we have 
$$
\bm{F}(t,h)=
\bigg{(}2\bm{e},
6(\bm{e}t+\bm{c}h),...,
(s+1)s(\bm{e}t+\bm{c}h)^{s-1}\bigg{)}.
$$
Subtracting between columns in the matrix $\bm{F}(t,h)$, we see that
$$
\det \bm{F}(t,h)=
\begin{vmatrix}
2  &6c_1h  & \cdots &(s+1)s(c_1h)^{s-1}\\
2  &6c_2h  & \cdots &(s+1)s(c_2h)^{s-1}\\
\vdots    & \vdots    & \ddots & \vdots\\
2  &6c_sh  & \cdots &(s+1)s(c_sh)^{s-1}
\end{vmatrix} 
= s!(s+1)! h^{\frac{s(s-1)}{2}}
\begin{vmatrix}
1  &c_1  & \cdots &c_1^{s-1}\\
1  &c_2  & \cdots &c_2^{s-1}\\
\vdots    & \vdots    & \ddots & \vdots\\
1  &c_s  & \cdots &c_s^{s-1}
\end{vmatrix}.
%\not= 0,\qquad \forall h\not=0.
$$
Observe that $\det \bm{F}(t,h)$ is a multiple of the determinant of a Vandermonde matrix and, therefore, 
is nonzero if $h\not =0$ and $(c_i)_{i=1}^s$ are distinct.  
Hence, the collocation condition is always satisfied
when the fitting functions $\{u_i(t)\}_{i=1}^s$ are the monomials. 
\end{rem}

The practical implication of the proposed collocation condition is that 
for any given $t$ we can control the stepsize $h$ to get a nonsingular 
$\bm{F}(t,h)$ and then solve for $\bm{A},\bm{b}$, and $\bm{d}$.
For a given set of functions 
$\{u_1(t), ..., u_s(t)\}$ which satisfies the collocation condition and a given $t\in [t_0,t_0+T)$, there will be at most 
countably many values of $h$ such that $\bm{F}(t,h)$ is singular. Thus, we have the following result.

\begin{thm}
The coefficients of a FEPTRKN method with respect to a set 
of basis functions that satisfy the collocation condition are uniquely 
determined almost everywhere on the integration domain.
\end{thm}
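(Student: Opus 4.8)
The plan is to extract the coefficients directly from the linear systems already exhibited in Remark~\ref{sysAb}. Recall that the defining relations \eqref{FEPTRKNdef} are equivalent to the three systems in \eqref{sysABeq}, each of which has the \emph{same} $s\times s$ matrix $\bm{F}(t,h)$ acting on the right, up to the harmless scalar prefactors $h^2$, $h$, $h^2$. Hence, for a fixed $t$ and any $h>0$ at which $\bm{F}(t,h)$ is nonsingular, the row $\bm{b}^T(t,h)$, the row $\bm{d}^T(t,h)$, and the rows of $\bm{A}(t,h)$ are recovered by multiplying the (known, smooth) left-hand sides of \eqref{sysABeq} on the right by $h^{-2}\bm{F}(t,h)^{-1}$, $h^{-1}\bm{F}(t,h)^{-1}$, and $h^{-2}\bm{F}(t,h)^{-1}$ respectively. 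In particular, they exist and are \emph{uniquely} determined at such a pair $(t,h)$. So the entire content of the theorem is to show that the bad set, where $\bm{F}(t,h)$ is singular, together with the degenerate line $h=0$, is negligible.

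First I would fix $t\in[t_0,t_0+T)$. By the collocation condition, the set $N_t:=\{h\in[0,h_{\max}]:\det\bm{F}(t,h)=0\}$ has one-dimensional Lebesgue measure zero (indeed it is at most countable), and adjoining the single point $h=0$ does not change this. On the complement $[0,h_{\max}]\setminus(N_t\cup\{0\})$ the linear-algebra argument of the previous paragraph applies, so the FEPTRKN coefficients at $(t,h)$ are uniquely determined. This already yields the statement in the form ``for every $t$, the coefficients are uniquely determined for almost every $h$''.

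To get the joint formulation ``almost everywhere on the integration domain'' $[t_0,t_0+T)\times[0,h_{\max}]$, I would upgrade the slicewise statement by Tonelli's theorem. Put $S:=\{(t,h):\det\bm{F}(t,h)=0\}\cup\big([t_0,t_0+T)\times\{0\}\big)$. Since the $u_i$ are sufficiently smooth, $(t,h)\mapsto\det\bm{F}(t,h)$ is continuous, so $S$ is closed in the strip and hence Lebesgue measurable; by the previous paragraph every vertical slice $S_t$ is a null set, so $\int \mathrm{meas}(S_t)\,dt=0$, and Tonelli gives $\mathrm{meas}_2(S)=0$. On the complement of $S$ the representation above determines $\bm{A},\bm{b},\bm{d}$ uniquely, which proves the theorem.

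The only delicate point — the ``hard part'' — is the bookkeeping in this last step: one must check that the exceptional set is genuinely measurable, so that Tonelli/Fubini may be invoked, and one must make sure the degenerate line $h=0$, where the systems \eqref{sysABeq} carry factors $h^2$ and $h$ and hence impose no condition, is swept into the null set. Both are settled by the continuity of $\det\bm{F}$ and the fact that a single line has planar measure zero; the remainder is the elementary linear algebra of inverting $\bm{F}(t,h)$.
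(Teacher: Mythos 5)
Your proposal is correct and follows essentially the same route as the paper: the paper derives the theorem directly from the linear systems in Remark~\ref{sysAb} together with the collocation condition, which makes $\bm{F}(t,h)$ invertible for all but a null (indeed, as the paper asserts, at most countable) set of stepsizes $h$ at each $t$. Your Tonelli upgrade to a genuinely two-dimensional ``almost everywhere'' statement is a harmless extra refinement the paper does not bother to spell out.
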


\subsection{The collocation solution}

Let $(c_i)_{i=1}^s$ be distinct and $\{u_i\}_{i=1}^s$ be a basis satisfying the collocation condition. 
Define
$$\bm{H}:= \Span\{1,t,u_1,...,u_s\} :=
\bigg\{ \alpha_0 + a_0 t+
\sum_{i=1}^sa_iu_i(t)\bigg{|} \alpha_0\in \IR,\, a_i \in \IR, \, i=0,...,s\bigg\}.
$$
Given $y_n$, $y_n'$, and $f(t_n+c_ih,Y_{n,i})$, $i=1,...,s$, we call $u(t)$ the {\em collocation solution}
if $u(t)\in \bm{H}$ and the following equations hold
\begin{equation}
\label{interpol}
\begin{split}
u(t_n) &= y_n,\\
u'(t_n) &= y'_n,\\
u''(t_n+c_ih) &= f(t_n+c_ih,Y_{n,i}),\qquad i = 1, ..., s.
\end{split}
\end{equation}
If such a $u(t)$ exists, the numerical solution, derivative of the solution, and the stage values 
at the $(n+1)$-th step are defined by
\begin{equation}
\label{iteratepoly}
y_{n+1} := u(t_{n+1}),\quad y'_{n+1} := u'(t_{n+1}),\quad Y_{n+1,i} := u(t_{n+1}+c_ih),\qquad i=1,...,s, \quad t_{n+1}=t_n+h.
\end{equation}
Equations \eqref{interpol} and \eqref{iteratepoly} can be called a generalized collocation method for 
integrating equation \eqref{sys}. 
When $y_n$, $y'_n$, and $\bm{Y}_n= (Y_{n,1}, ..., Y_{n,s})^T$ are known, 
$u(t)$ can be constructed explicitly  
through a direct interpolation involving $y_n$, $y'_n$, and 
$f(\bm{e}t_n+\bm{c}h,\bm{Y}_n)$.
The existence of such collocation solution 
is made more precise as follows.

\begin{lem}\label{newinter}
Suppose that the $s+2$ values $z_0,z'_0,z_1,z_2 ..., z_s$ are given 
and the pair $(t_n,h)$ is such that $\bm{F}(t_n,h)$ is nonsingular, 
then there exists an interpolation function $\varphi \in\bm{H}$ such 
that 
\begin{equation}
\label{ae11}
\varphi(t_n) = z_0,\quad \varphi'(t_n) = z'_0,\quad \varphi''(t_n+c_ih)=z_i, \qquad i=1,...,s.
\end{equation}

\end{lem}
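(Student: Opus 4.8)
The plan is to construct $\varphi$ explicitly as an element of $\bm{H}$, i.e.\ to write $\varphi(t) = \alpha_0 + a_0 t + \sum_{i=1}^s a_i u_i(t)$ and determine the $s+2$ unknown coefficients $\alpha_0, a_0, a_1,\dots,a_s$ from the $s+2$ interpolation conditions \eqref{ae11}. The key observation is that the problem decouples: since $(\alpha_0 + a_0 t)'' = 0$, the second-derivative conditions $\varphi''(t_n+c_ih) = z_i$ for $i=1,\dots,s$ involve only the coefficients $a_1,\dots,a_s$, and they read
$$
\sum_{j=1}^s a_j u_j''(t_n+c_ih) = z_i, \qquad i=1,\dots,s.
$$
In matrix form this is $\bm{F}(t_n,h)^T (a_1,\dots,a_s)^T = (z_1,\dots,z_s)^T$, where $\bm{F}(t_n,h)$ is exactly the matrix from the collocation condition whose columns are the vectors $u_j''(\bm{e}t_n+\bm{c}h)$. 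Since $(t_n,h)$ is assumed to make $\bm{F}(t_n,h)$ nonsingular, this linear system has a unique solution $(a_1,\dots,a_s)$.

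Once $a_1,\dots,a_s$ are fixed, the remaining two conditions $\varphi(t_n) = z_0$ and $\varphi'(t_n) = z'_0$ determine $\alpha_0$ and $a_0$: the second gives $a_0 = z'_0 - \sum_{j=1}^s a_j u_j'(t_n)$, and then the first gives $\alpha_0 = z_0 - a_0 t_n - \sum_{j=1}^s a_j u_j(t_n)$. This is a triangular elimination with no invertibility obstruction, so $\alpha_0$ and $a_0$ exist and are unique. Assembling these coefficients produces a function $\varphi \in \bm{H}$ satisfying all of \eqref{ae11}, which proves existence (and in fact uniqueness, though only existence is claimed).

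The argument is essentially routine once the decoupling is noticed; the only real point is to recognize that the matrix governing the solvability of the $z_i$-block is precisely $\bm{F}(t_n,h)$ (up to transpose), so that the hypothesis of the lemma is exactly what is needed. Thus there is no genuine obstacle — the main thing to get right is the bookkeeping that separates the "linear part'' $\alpha_0 + a_0 t$ (which is annihilated by the second derivative) from the $u_i$-part, and to state clearly that nonsingularity of $\bm{F}(t_n,h)$ handles the only block that could fail to be solvable. One should also note in passing, as the preceding remark already does, that the collocation condition forces no $u_i$ to contribute a linear term in a way that would collide with $\alpha_0 + a_0 t$, so the representation of elements of $\bm{H}$ by the $s+2$ coefficients is unambiguous; but for mere existence of $\varphi$ this is not even needed.
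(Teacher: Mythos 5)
Your proposal is correct and follows essentially the same route as the paper: the paper writes $\varphi=\alpha_0+a_0t+\sum_i a_iu_i$ and observes that the resulting $(s+2)\times(s+2)$ system is block upper-triangular with unit upper-left block, so its determinant equals $\det\bm{F}(t_n,h)$, which is exactly your decoupling argument carried out as a single determinant computation. (Minor point: with the paper's convention $\bm{F}_{ij}=u_j''(t_n+c_ih)$, the $z_i$-block is $\bm{F}(t_n,h)$ itself rather than its transpose, but as you note this is immaterial for invertibility.)
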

\begin{proof}
Since $\varphi \in \bm{H}$, it can
be represented in the form
$$
\varphi(t)=\alpha_0+ a_0t+\sum_{i=1}^{s}a_iu_i(t).
$$
Let $t_{n,i}:=t_n+c_ih$, $i=1,...,s$. 
Equation \eqref{ae11}
can be written as
\begin{equation}
\label{mq1}
\begin{pmatrix}
1&t_n&u_{1}(t_n)  & u_{2}(t_n)  & \cdots & u_{s}(t_n)\\
0&1&u'_{1}(t_n)  & u'_{2}(t_n)  & \cdots & u'_{s}(t_n)\\
0&0&u''_{1}(t_{n,1})  & u''_{2}(t_{n,1})  & \cdots & u''_{s}(t_{n,1})\\
\vdots&\vdots&\vdots    & \vdots    & \ddots & \vdots\\
0&0&u''_{1}(t_{n,s})  & u''_{2}(t_{n,s})  & \cdots & u''_{s}(t_{n,s})
\end{pmatrix}
\begin{pmatrix}
\alpha_0\\
a_0\\
a_1\\
\vdots\\
a_s
\end{pmatrix}
=
\begin{pmatrix}
z_0\\
z'_0\\
z_1\\
\vdots\\
z_s
\end{pmatrix}.
\end{equation}
Since the matrix in the left-hand side of \eqref{mq1} has the same
determinant as $\bm{F}(t_n,h)$, the constants $(a_i)_{i=0}^s$ and $\alpha_0$
are uniquely determined from linear system \eqref{mq1} under the assumption that $\bm{F}(t_n,h)$
is nonsingular. Thus, the function $\varphi(t)\in \bm{H}$ satisfying \eqref{ae11} is determined uniquely. 
\end{proof}

\begin{thm}
\label{collofuns}
%Let $\bm{Y}_{n} = (Y_{n,1}, ..., Y_{n,s})^T$ be approximations to the stage
%values $y(\bm{e}t_n+ \bm{c}h)$ at the $n$-th step. 
The generalized collocation method \eqref{interpol}--\eqref{iteratepoly} 
is equivalent to the $s$-stage FEPTRKN method with coefficients 
$(\bm{c}$, $\bm{A}(t_n,h)$, $\bm{b}(t_n,h)$, $\bm{d}(t_n,h))$.
\end{thm}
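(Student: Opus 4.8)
The plan is to show that the collocation solution $u(t)$ built at the $n$-th step reproduces, when propagated one step forward, exactly one iteration of the EPTRKN scheme \eqref{EPTRKN} with the variable coefficients $\bm{A}(t_n,h)$, $\bm{b}(t_n,h)$, $\bm{d}(t_n,h)$ determined by \eqref{FEPTRKNdef}. Everything rests on one elementary remark: the relations \eqref{FEPTRKNdef} are linear in $u_i$, and they hold trivially for the two extra generators $1$ and $t$ of $\bm{H}=\Span\{1,t,u_1,\dots,u_s\}$, because in each of the three identities the term carrying $\bm{F}(t,h)$ involves a second derivative, which vanishes on linear functions; explicitly, for the constant function the three identities read $1=1$, $0=0$, $\bm{e}=\bm{e}$, and for $t$ they read $t+h=t+h$, $1=1$, $\bm{e}t+\bm{c}h+\bm{e}h=\bm{e}t+\bm{e}h+h\bm{c}$. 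Since each $u_i$ satisfies \eqref{FEPTRKNdef} by the definition of a FEPTRKN method, it follows by linearity that every $u\in\bm{H}$, written as $u(t)=\alpha_0+a_0t+\sum_{i=1}^s a_iu_i(t)$, satisfies all three identities of \eqref{FEPTRKNdef} with the very same coefficients.

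First I would fix a pair $(t_n,h)$ for which $\bm{F}(t_n,h)$ is nonsingular — the generic situation, by the collocation condition, so the coefficients exist and are unique (Remark~\ref{sysAb} and the preceding uniqueness theorem) — and invoke Lemma~\ref{newinter} with the data $z_0=y_n$, $z_0'=y_n'$, $z_i=f(t_n+c_ih,Y_{n,i})$ to get the collocation solution $u\in\bm{H}$ obeying \eqref{interpol}; thus $u(t_n)=y_n$, $u'(t_n)=y_n'$, and $u''(\bm{e}t_n+\bm{c}h)=f(\bm{e}t_n+\bm{c}h,\bm{Y}_n)$. Next I would write the three identities of \eqref{FEPTRKNdef} for this $u$ at $t=t_n$ (valid by the linearity remark), substitute the interpolation data on the right-hand sides and the definitions \eqref{iteratepoly} of $y_{n+1}=u(t_{n+1})$, $y'_{n+1}=u'(t_{n+1})$, $\bm{Y}_{n+1}=u(\bm{e}t_{n+1}+\bm{c}h)$ on the left-hand sides. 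The first identity then becomes $y_{n+1}=y_n+hy_n'+h^2\bm{b}^T(t_n,h)f(\bm{e}t_n+\bm{c}h,\bm{Y}_n)$, the second becomes $y'_{n+1}=y'_n+h\bm{d}^T(t_n,h)f(\bm{e}t_n+\bm{c}h,\bm{Y}_n)$, and the third becomes $\bm{Y}_{n+1}=\bm{e}y_{n+1}+h\bm{c}y'_{n+1}+h^2\bm{A}(t_n,h)f(\bm{e}t_n+\bm{c}h,\bm{Y}_n)$, which together are exactly \eqref{EPTRKN}.

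To close, I would record the converse direction: at each step the EPTRKN iteration uses only $y_n$, $y_n'$, and the vector $f(\bm{e}t_n+\bm{c}h,\bm{Y}_n)$, and feeding precisely these values as $z_0,z_0',(z_i)$ into Lemma~\ref{newinter} returns the collocation solution of that step, whose forward values $u(t_{n+1})$, $u'(t_{n+1})$, $u(\bm{e}t_{n+1}+\bm{c}h)$ equal the EPTRKN outputs $y_{n+1}$, $y'_{n+1}$, $\bm{Y}_{n+1}$ by the computation above; hence, started from the same $y_0,y_0',\bm{Y}_0$, the two schemes generate identical sequences, which is the asserted equivalence. I do not expect a genuine obstacle here — the content is the linearity remark about $\bm{H}$ together with a direct substitution. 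The only point needing a word of care is the exceptional, at most countable, set of $h$ for which $\bm{F}(t_n,h)$ is singular: there neither the FEPTRKN coefficients nor the collocation solution are defined, so, exactly as for the existence of the coefficients, the equivalence is an almost-everywhere statement, consistent with the uniqueness theorem above.
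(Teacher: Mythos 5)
Your proposal is correct and follows essentially the same route as the paper's proof: invoke Lemma~\ref{newinter} to obtain the collocation solution, extend the relations \eqref{FEPTRKNdef} from the basis functions to all of $\bm{H}$ by linearity, and substitute the interpolation data to recover \eqref{eq10}. The only differences are that you make explicit the (trivial) verification of \eqref{FEPTRKNdef} on the generators $1$ and $t$, which the paper leaves implicit in the phrase ``$u(t)$ is a linear combination of $u_i(t)$, $1$, and $t$,'' and that you add the converse direction and the almost-everywhere caveat — both harmless refinements of the same argument.
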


\begin{proof}
It follows from Lemma~\ref{newinter} that there exists a unique interpolation
function $u(t)\in \bm{H}$ such that 
\begin{equation}
\label{mr1}
u(t_{n})=y_{n},\qquad u'(t_{n})=y'_{n},\qquad 
 u''(t_{n}+c_ih)=f(t_{n}+ c_ih,Y_{n,i}),\qquad i=1,...,s. 
\end{equation}
Thus, $u(t)$ satisfies \eqref{interpol} and is the collocation solution being searched for. To complete the proof,
we have to prove that if we use $u(t)$ to generate the quantities in \eqref{iteratepoly},
then the following relations that define the FEPTRKN method hold true:
\begin{equation}
\label{eq10}
\begin{split}
y_{n+1} &= y_{n} +hy'_n+ h^2\bm{b}^T(t_{n},h)f(\bm{e}t_{n}+\bm{c}h,\bm{Y}_{n}),\\
y'_{n+1} &= y'_n+ h\bm{d}^T(t_{n},h)f(\bm{e}t_{n}+\bm{c}h,\bm{Y}_{n}),\\
\bm{Y}_{n+1} &= \bm{e}y_{n+1} +h\bm{c}y'_{n+1} + h^2\bm{A}(t_{n},h)f(\bm{e}t_{n}+\bm{c}h,\bm{Y}_{n}).
\end{split}
\end{equation}
Since $u(t) \in \bm{H}$, it has the following representation
\begin{equation}
\label{phicomb}
u(t)=\alpha_0 + a_0t + \sum_{i=1}^sa_iu_i(t).
\end{equation}
From the definition of a FEPTRKN method in \eqref{FEPTRKNdef},
the coefficients $(\bm{c},  \bm{A}, \bm{b},\bm{d})$ 
satisfy
\begin{subequations}
\begin{align}
\label{eq12a}
u_i(t_{n+1}) &= u_i(t_{n}) + h u'_i(t_{n})+
h^2\bm{b}^T(t_{n},h)u_i''(\bm{e}t_{n}+\bm{c}h),\\
u'_i(t_{n+1}) &= u'_i(t_{n}) +
h\bm{d}^T(t_{n},h)u_i''(\bm{e}t_{n}+\bm{c}h),\\
u_i(\bm{e}t_n+\bm{e}h+\bm{c}h) &= \bm{e}u_i(t_n+h) + \bm{c}hu'_i(t_n+h)+
h^2\bm{A}(t_n,h)u''_i(\bm{e}t_n+\bm{c}h),
\end{align}
\end{subequations}
for all $i = 1, ..., s$.
From equation \eqref{eq12a} and the fact that $u(t)$ is a linear 
combination of functions $u_i(t)$, $1$, and $t$ (cf. \eqref{phicomb}), we obtain
\begin{align}
\label{cuoi2}
u(t_{n+1})=u(t_{n})+hu'(t_{n})+ h^2\bm{b}^T(t_{n},h)u''(\bm{e}t_{n}+\bm{c}h).
\end{align}
Equation \eqref{iteratepoly}, equation \eqref{mr1}, and equation
\eqref{cuoi2} imply that
\begin{align*}
\label{equivalent1}
y_{n+1} = u(t_{n+1}) = y_n + hy'_n +
h^2\bm{b}^T(t_{n},h)f(\bm{e}t_{n}+\bm{c}h,\bm{Y}_{n}).
\end{align*}
Thus, the first equation in \eqref{eq10} holds. The other equations in \eqref{eq10} are obtained similarly. Theorem \ref{collofuns} is proved. 
\end{proof}

\section{Accuracy and stability properties}
\label{sec:accuracy}

\subsection{Accuracy order}

The stage order and step order of a FEPTRKN method are defined
as follows (cf. \cite{cong99}). 

\begin{defn}
\label{def4.1}
A FEPTRKN method is said to have step order $p=\min\{p_1,p_2\}$ 
and stage order $r=\min\{p_1,p_2,p_3\}$ if
\begin{align*}
y(t_{n+1})-y_{n+1}&=O(h^{p_1+1}),\\
y'(t_{n+1})-y'_{n+1}&=O(h^{p_2+1}),\\
y(\bm{e}t_{n+1}+\bm{c}h)-\bm{Y}_{n+1}&=O(h^{p_3+1}),
\end{align*}
given that
$y_{n} = y(t_{n})$, $y'_{n} = y'(t_{n})$, and $y(\bm{e}t_{n}+\bm{c}h)-\bm{Y}_{n}=O(h^{p_3+1})$.
\end{defn}

\begin{rem}\label{smooth1}\rm 
We recall this handy result from \cite[Remark 3.1]{nguyen2}.
If a function $f(t)\in C^{m+n}([t_0,\,t_0+T])$ satisfies  
$f(t_i)=0,i=1,...,n$, then there exists $g(t)\in C^{m}([t_0,\,t_0+T])$ such that 
$f(t)=g(t)\prod_{i=1}^n(t-t_i)$. 
\end{rem}

\begin{thm}\label{genorder}
An s-stage FEPTRKN method has stage order $r=s$ and 
step order at least $p=s$.
\end{thm}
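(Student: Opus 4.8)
The plan is to exploit the collocation-equivalence Theorem~\ref{collofuns}: the FEPTRKN step applied to exact data is the same as producing the interpolant $u\in\bm H$ determined by $u(t_n)=y(t_n)$, $u'(t_n)=y'(t_n)$, and $u''(t_n+c_ih)=f(t_n+c_ih,Y_{n,i})$, and then evaluating $u$ at $t_{n+1}$ and at $t_{n+1}+c_ih$. So the errors $y(t_{n+1})-y_{n+1}$, $y'(t_{n+1})-y'_{n+1}$, and $y(t_{n+1}+c_ih)-Y_{n+1,i}$ are all controlled by how well $u$ approximates the true solution $y$ near $t_n$. The strategy is therefore to compare $y$ with a reference interpolant $w\in\bm H$ that matches $y$ and $y'$ at $t_n$ and matches $y''$ (not the perturbed stage values) at the $s$ collocation points $t_n+c_ih$, estimate $y-w$, and then separately estimate $w-u$ coming from the stage perturbation $y(e t_n+\bm c h)-\bm Y_n=O(h^{s+1})$.

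First I would set up $w\in\bm H$ by Lemma~\ref{newinter} with data $z_0=y(t_n)$, $z_0'=y'(t_n)$, $z_i=y''(t_n+c_ih)$. The key estimate is that $e(t):=y(t)-w(t)$ satisfies $e(t_n)=e'(t_n)=0$ and $e''(t_n+c_ih)=0$ for $i=1,\dots,s$; here one uses that $\bm H$ contains $1$ and $t$, so $w''\in\Span\{u_1'',\dots,u_s''\}$ and the second-derivative interpolation at the $s$ points $t_n+c_ih$ is exactly solvable by the collocation condition (nonsingularity of $\bm F(t_n,h)$). Applying Remark~\ref{smooth1} to $e''$ (which vanishes at the $s$ points $t_n+c_ih$) gives $e''(t)=g(t)\prod_{i=1}^s(t-t_n-c_ih)$ with $g$ smooth, hence $e''(t)=O(h^s)$ uniformly for $t$ in an $O(h)$-neighborhood of $t_n$; integrating twice from $t_n$ using $e(t_n)=e'(t_n)=0$ yields $e'(t_{n+1})=O(h^{s+1})$ and $e(t_{n+1})=O(h^{s+2})$, and likewise $e(t_{n+1}+c_ih)=O(h^{s+2})$. (A subtlety worth stating: one should check the interpolation is ``uniform'' in $h$, i.e. the solution of \eqref{mq1} stays bounded as $h$ varies in a suitable range — this is where one may need a mild assumption, or the polynomial-like scaling of $\bm F$, to ensure the $O(\cdot)$ constants do not blow up.)

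Next I would handle the difference $w-u$. Both lie in $\bm H$ and agree in the $(\alpha_0,a_0)$-data at $t_n$; they differ only in the right-hand side entries $z_i$: for $w$ these are $y''(t_n+c_ih)$, for $u$ they are $f(t_n+c_ih,Y_{n,i})$. By the ODE, $y''(t_n+c_ih)=f(t_n+c_ih,y(t_n+c_ih))$, so the discrepancy is $f(t_n+c_ih,y(t_n+c_ih))-f(t_n+c_ih,Y_{n,i})$, which by the Lipschitz condition is $O(\|y(e t_n+\bm c h)-\bm Y_n\|)=O(h^{s+1})$. Solving the linear system \eqref{mq1} with a right-hand side perturbation of size $O(h^{s+1})$ and invoking the same boundedness-of-inverse remark, one gets the coefficients of $w-u$ perturbed by $O(h^{s+1})$, hence $(w-u)(t_{n+1})=O(h^{s+1})$, $(w-u)'(t_{n+1})=O(h^{s+1})$, and $(w-u)(t_{n+1}+c_ih)=O(h^{s+1})$. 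Combining with the previous paragraph: $y(t_{n+1})-y_{n+1}=O(h^{s+1})$, $y'(t_{n+1})-y'_{n+1}=O(h^{s+1})$, $y(e t_{n+1}+\bm c h)-\bm Y_{n+1}=O(h^{s+1})$. By Definition~\ref{def4.1} this gives $p_1,p_2\ge s$ and $p_3\ge s$, so stage order $r=s$ and step order at least $p=s$.

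The main obstacle I anticipate is not any single estimate but the uniformity of constants in $h$: Lemma~\ref{newinter} guarantees invertibility of the interpolation matrix for fixed $(t_n,h)$, but to turn ``$e''(t)=O(h^s)$'' and ``coefficient perturbation $=O(h^{s+1})$'' into genuine order statements one needs $\bm F(t_n,h)^{-1}$ (suitably scaled) to be bounded as $h\to 0$, and one needs control of the smooth function $g$ from Remark~\ref{smooth1} in terms of bounds on $y$ and the $u_i$. I would address this by working on a fixed compact $h$-interval, using the continuity/smoothness hypotheses on the $u_i$ and on $f$, and — if necessary — factoring out the natural powers of $h$ from the columns of $\bm F$ (as the monomial computation in the preceding remark suggests) so that the scaled matrix tends to a fixed nonsingular Vandermonde-type matrix. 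Everything else is routine Taylor expansion and integration.
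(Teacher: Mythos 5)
Your argument is correct and reaches the stated conclusion, but it follows a genuinely different route from the paper. The paper works directly with the single residual $R(t)=u(t)-y(t)$: it converts the stage perturbation $\bm{Y}_n-y(\bm{c}h)=O(h^{s+1})$ via the Lipschitz condition into $R''(c_ih)=O(h^{s+1})$, writes $R''(t)=g(t)\prod_{i=1}^s(t-c_ih)+O(h^{s+1})$ using Remark~\ref{smooth1}, and integrates once and twice from $t_n$. You instead split $y-u=(y-w)+(w-u)$ through an auxiliary exact-data interpolant $w\in\bm H$, treat $y-w$ by the same Remark~\ref{smooth1} device, and treat $w-u$ as a perturbation of the linear system \eqref{mq1}. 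Both decompositions work; the paper's buys two things. First, it never needs to solve or bound the perturbed system \eqref{mq1} — the only place the (scaled) invertibility of $\bm F$ enters is through the existence of $u$ via Lemma~\ref{newinter} — so the ``boundedness of $\bm F^{-1}$'' worry you raise is confined to the single claim that the $O(h^{s+1})$ remainder in $R''$ is uniform, the same uniformity tacitly assumed for $g$ in both arguments. Second, integrating the combined identity for $R''$ twice gives the sharper stage estimate $\bm{Y}_{n+1}-y(\bm e h+\bm c h)=O(h^{s+2})$ (equation \eqref{eq14b}), which the paper reuses in Remark~\ref{remarka4} and in the superconvergence Theorems~\ref{super2} and~\ref{super3}; your perturbation step as written only delivers $O(h^{s+1})$ for $w-u$ at the stage points (your intermediate claim that the \emph{coefficients} of $w-u$ are perturbed by $O(h^{s+1})$ is not literally true — for monomials the inverse of the matrix in \eqref{mq1} grows like $h^{-(s-1)}$ — though the evaluated quantities are in fact even smaller, $O(h^{s+3})$, if one uses the integral representation $v(t_{n+1})=\int_{t_n}^{t_{n+1}}(t_{n+1}-\tau)v''(\tau)\,d\tau$ together with a uniform bound on $v''$). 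So your proof establishes Theorem~\ref{genorder} as stated, but to feed the later results you would want to sharpen the $w-u$ estimate or collapse the two steps into the paper's single-residual computation.
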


\begin{proof}
Without loss of generality we assume that $t_{n}=0$. 
Let $u(t)\in \bm{H}$ be the collocation solution satisfying
\begin{equation}
\label{ae9}
u(t_{n})=y_{n},\quad u'(t_{n})=y'_{n},\quad
u''(\bm{c}h)=f(\bm{c}h,\bm{Y}_{n}).
\end{equation}
This collocation solution exists by Lemma~\ref{newinter}  
and we have
$$
y_{n+1} = u(t_{n+1}),\quad y'_{n+1}=u'(t_{n+1}),\quad \bm{Y}_{n+1}=u(\bm{e}h+\bm{c}h).
$$
From the equation $y''(t) = f(t,y(t))$, $\forall t\in [t_0,t_0 + T]$, the third equation in \eqref{ae9}, the local assumption 
$\bm{Y}_{n}-y(\bm{c}h) = O(h^{s+1})$,
and Taylor expansions of $f(t,y)$ with respect to $y$, one gets
\begin{equation}
\label{ae10}
u''(\bm{c}h)-y''(\bm{c}h)=
f(\bm{c}h,\bm{Y}_{n})-f(\bm{c}h,y(\bm{c}h))
= O(h^{s+1}).
\end{equation}
Let 
$$
R(t):=u(t)-y(t),\qquad 0\le t\le t_0 + T. 
$$ 
From equations \eqref{ae9} and \eqref{ae10} one gets
$$
R(0)=0,\qquad R'(0)=0,\qquad R''(c_ih)=u''(c_ih) - y''(c_ih) =O(h^{s+1}),\qquad i=1,...,s.
$$
Since $R''(t)$ is sufficiently smooth and $R''(c_ih)=O(h^{s+1})$, $i=1,...,s$, there exists a sufficiently smooth function $w(t)$
such that 
$$
R''(t)=w(t)+O(h^{s+1}),\qquad  w(c_ih)=0,\qquad i=1,..,s.
$$ 
Since $w(c_ih)=0$, $i=1,...,s$, it follows from Remark~\ref{smooth1} that there exists a sufficiently smooth function $g(t)$ such that
$$
w(t) = g(t)\prod_{i=1}^s(t-c_ih).
$$
Thus,
\begin{equation}
\label{eq14}
R''(t) = g(t)\prod_{i=1}^s(t-c_ih) + O(h^{s+1}).
\end{equation}
This and the relation $R'(0)=0$ imply
$$
R'(x) = \int_0^x R''(t)\, dt = \int_0^x \bigg(g(t)\prod_{i=1}^s(t-c_ih) + O(h^{s+1})\bigg)\, dt.
$$
Letting $x = h$ in the above equality and using the substitution $t=\xi h$, we get
\begin{equation}
\label{eq16}
R'(h) = h^{s+1}\int_0^1 g(\xi h)\prod_{i=1}^s(\xi-c_i)\, d\xi + 
\int_0^{h} O(h^{s+1})\, dt. 
\end{equation}
This and the relation $\int_0^{h} O(h^{s+1})\, dt = O(h^{s+2})$ imply
\begin{equation}
\label{ae6}
y'_{n+1}-y'(t_{n+1})
=u'(t_{n+1})-y'(t_{n+1})=R'(h) = O(h^{s+1}).
\end{equation}

From equation \eqref{eq14} and the equalities $R(0)=0$ and $R'(0)=0$, one gets
\begin{equation*}
R(x)=\int_0^x\int_0^t R''(\xi )\,d\xi dt =\int_0^x\int_0^t\bigg{(}g(\xi)\prod_{i=1}^s(\xi -c_ih)+ O(h^{s+1})\bigg{)}\, d\xi dt .
\end{equation*}
Substituting $x:=\omega h$ in the above equality and using substitutions, we obtain
\begin{equation}
\label{intererror}
\begin{split}
u(\omega h)-y(\omega h) = R(\omega h) &= h^{s+2}\int_0^\omega\int_0^t g(\xi h)\prod_{i=1}^s(\xi -c_i)\,d\xi dt +\int_0^{\omega h}\int_0^t O(h^{s+1})\, d\xi dt.
%&= h^{s+2}g(0)\int_0^\omega\int_0^t \prod_{i=1}^s(\xi -c_i)\,d\xi dt +\int_0^{\omega h}%\int_0^t O(h^{s+1})\, d\xi dt.
\end{split}
\end{equation}
This and the relation $\int_0^{\omega h}\int_0^t O(h^{s+1})\, d\xi dt= O(h^{s+3})$, $\omega=const$, imply
\begin{align}
\label{eq14b}
\bm{Y}_{n+1}-y(\bm{e}h+\bm{c}h)
&=u(\bm{e}h+\bm{c}h)-y(\bm{e}h+\bm{c}h)=O(h^{s+2}),\\
\label{eq14c}
y_{n+1}-y(h)
&=u(h)-y(h)=O(h^{s+2}).
\end{align}
Equations \eqref{ae6}, \eqref{eq14b}, \eqref{eq14c}, and Definition \ref{def4.1} imply that 
the method has step order $p=s$ and stage order $r=s$. 

Theorem \ref{genorder} is proved. 
\end{proof}

\begin{remark}{\rm 
\label{remarka4}
From \eqref{eq14b} we have
\begin{equation}
\bm{Y}_n - y(\bm{e}t_n +\bm{c}h) = O(h^{s+2}),\qquad n=1,2,...\,.
\end{equation}
Thus, if $\bm{Y}_0$ is computed with a local accuracy order of $s+2$ at the initial step, then we have
\begin{equation}
\label{aex1}
\bm{Y}_n - y(\bm{e}t_n +\bm{c}h) = O(h^{s+2}),\qquad n=0,1,...\,.
\end{equation}
}
\end{remark}

\subsection{Superconvergence}

\begin{thm}\label{super1}
If an $s$-stage FEPTRKN 
method has collocation parameters $(c_i)_{i = 1}^s$ that satisfy
\begin{equation}
\label{eq16a}
\int_{0}^1\prod_{i=1}^s(\xi-c_i)\,d\xi  = 0,
\end{equation}
then the method has step order $p=s+1$ and stage order $r=s+1$. 
\end{thm}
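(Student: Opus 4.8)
The plan is to treat this as a one-line refinement of the proof of Theorem~\ref{genorder}. That proof already establishes $y_{n+1}-y(h)=O(h^{s+2})$ and $\bm{Y}_{n+1}-y(\bm{e}h+\bm{c}h)=O(h^{s+2})$ (equations \eqref{eq14b}--\eqref{eq14c}), so the only quantity standing in the way of step order and stage order $s+1$ is the derivative error $y'_{n+1}-y'(t_{n+1})$, which \eqref{ae6} bounds merely by $O(h^{s+1})$. So I would revisit exactly that estimate and show that the orthogonality hypothesis \eqref{eq16a} upgrades it to $O(h^{s+2})$; everything else is quoted verbatim from Theorem~\ref{genorder}.

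Concretely, take $t_n=0$, let $u\in\bm{H}$ be the collocation solution of \eqref{ae9}, and set $R:=u-y$. As in Theorem~\ref{genorder}, $R(0)=R'(0)=0$ and $R''(c_ih)=O(h^{s+1})$, so by Remark~\ref{smooth1} one writes $R''(t)=g(t)\prod_{i=1}^s(t-c_ih)+O(h^{s+1})$ with $g$ sufficiently smooth in $t$ and bounded (uniformly in $h$) on the relevant compact interval. Integrating once from $0$ and substituting $t=\xi h$ reproduces \eqref{eq16}:
\[
R'(h)=h^{s+1}\int_{0}^{1} g(\xi h)\prod_{i=1}^s(\xi-c_i)\,d\xi+O(h^{s+2}).
\]
Now I would expand $g(\xi h)=g(0)+O(h)$ uniformly for $\xi\in[0,1]$, which gives
\[
\int_{0}^{1} g(\xi h)\prod_{i=1}^s(\xi-c_i)\,d\xi=g(0)\int_{0}^{1}\prod_{i=1}^s(\xi-c_i)\,d\xi+O(h)=O(h),
\]
the leading term vanishing precisely by hypothesis \eqref{eq16a}. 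Hence $R'(h)=h^{s+1}\cdot O(h)+O(h^{s+2})=O(h^{s+2})$, i.e. $y'_{n+1}-y'(t_{n+1})=O(h^{s+2})$, so $p_2\ge s+1$.

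Combining this with the unchanged bounds \eqref{eq14b}--\eqref{eq14c} (which give $p_1\ge s+1$ and $p_3\ge s+1$), all three local errors are $O(h^{s+2})$, so Definition~\ref{def4.1} yields step order $p=\min\{p_1,p_2\}=s+1$ and stage order $r=\min\{p_1,p_2,p_3\}=s+1$; a bootstrapping argument as in Remark~\ref{remarka4} keeps the local stage assumption $\bm{Y}_n-y(\bm{e}t_n+\bm{c}h)=O(h^{s+2})$ valid at every step. The only point I would take care over is the uniformity in $h$ of the expansion $g(\xi h)=g(0)+O(h)$ on $[0,1]$ — equivalently, that $g=g_h$ stays bounded as $h\to 0$ — but this is exactly the type of estimate already implicit in Theorem~\ref{genorder} (it follows from the smooth dependence of the collocation data, hence of $u$ and $R$, on $h$), so I expect it to be the only mildly delicate step rather than a genuine obstacle; the rest is a direct recycling of the earlier proof.
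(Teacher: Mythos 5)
Your proposal is correct and follows essentially the same route as the paper: the paper's proof likewise starts from \eqref{eq16}, expands $g(\xi h)=g(0)+O(\xi h)$, uses the orthogonality condition \eqref{eq16a} to cancel the leading $h^{s+1}$ term in $R'(h)$, and then cites \eqref{eq14b}, \eqref{eq14c}, and Definition~\ref{def4.1}. The only difference is that you explicitly flag the uniform boundedness of $g$ in $h$, which the paper leaves implicit.
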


\begin{proof}From equation \eqref{eq16} and the relation $\int_0^h O(h^{s+1})\, dt = O(h^{s+2})$, we have
$$
y'_{n+1}-y'(t_{n+1}) = u'(t_{n+1})-y'(t_{n+1})=R'(h)=h^{s+1}\int_0^1g(\xi h)\prod_{i=1}^s(\xi -c_i)\,d\xi + O(h^{s+2}).
$$
Use the Taylor expansion $g(\xi h)=g(0)+O(\xi h)$ and orthogonality condition \eqref{eq16a} to get
\begin{equation}
\label{eq16c}
y'_{n+1} -y'(t_{n+1})=h^{s+1}g(0)\int_0^1\prod_{i=1}^s(\xi -c_i)\,d\xi + O(h^{s+2}) = O(h^{s+2}).
\end{equation}
The conclusion of the theorem follows from equations \eqref{eq14b}, \eqref{eq14c}, \eqref{eq16c}, and Definition \ref{def4.1}.
\end{proof}

\begin{thm}\label{super2}
If an $s$-stage FEPTRKN 
method has collocation parameters $(c_i)_{i = 1}^s$ that satisfy
\begin{equation}
\label{eq17}
%\int_{1}^2 (2-\xi)\prod_{i=1}^s(\xi-c_i)\, d\xi = 0,\qquad
\int_{0}^1 \xi^k\prod_{i=1}^s(\xi-c_i)\, d\xi = 0,\qquad k=0,1,
\end{equation}
then the method has step order $p=s+2$. 
\end{thm}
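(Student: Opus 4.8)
The plan is to reuse the error analysis of Theorem~\ref{genorder}, but to feed in a sharper local assumption. Note first that \eqref{eq17} with $k=0$ is exactly \eqref{eq16a}, so Theorem~\ref{super1} already applies and the method has stage order $r=s+1$; consequently the local hypothesis available here is the sharper bound $\bm{Y}_n-y(\bm{e}t_n+\bm{c}h)=O(h^{s+2})$ (cf.\ Remark~\ref{remarka4}). With $R(t):=u(t)-y(t)$, $R(0)=R'(0)=0$, rerunning the step leading to \eqref{ae10}--\eqref{eq14} with this stronger bound yields $R''(c_ih)=f(c_ih,Y_{n,i})-f(c_ih,y(c_ih))=O(h^{s+2})$, hence
\begin{equation*}
R''(t)=g(t)\prod_{i=1}^s(t-c_ih)+O(h^{s+2})
\end{equation*}
for a sufficiently smooth $g$, with $g$ and $g'$ bounded uniformly in $h$.

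Next I would integrate and rescale exactly as in \eqref{eq16} and \eqref{intererror}. Integrating once and substituting $t=\xi h$ gives $R'(h)=h^{s+1}\int_0^1 g(\xi h)\prod_{i=1}^s(\xi-c_i)\,d\xi+O(h^{s+3})$; Taylor-expanding $g(\xi h)=g(0)+g'(0)\,\xi h+O(h^2)$ and invoking \eqref{eq17} for $k=0$ and $k=1$ annihilates the two leading terms, so the integral is $O(h^2)$ and $R'(h)=O(h^{s+3})$, i.e.\ $p_2=s+2$. Integrating twice and rescaling gives
\begin{equation*}
R(h)=h^{s+2}\int_0^1\int_0^t g(\xi h)\prod_{i=1}^s(\xi-c_i)\,d\xi\,dt+O(h^{s+4}),
\end{equation*}
and the key point is that the $g(0)$-term of this double integral vanishes: by Fubini,
\begin{align*}
\int_0^1\int_0^t\prod_{i=1}^s(\xi-c_i)\,d\xi\,dt
&=\int_0^1(1-\xi)\prod_{i=1}^s(\xi-c_i)\,d\xi\\
&=\int_0^1\prod_{i=1}^s(\xi-c_i)\,d\xi-\int_0^1\xi\prod_{i=1}^s(\xi-c_i)\,d\xi=0
\end{align*}
by the two conditions in \eqref{eq17}. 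Writing $g(\xi h)=g(0)+O(\xi h)$, the remaining contribution is $O(h)$, so $R(h)=O(h^{s+3})$, i.e.\ $p_1=s+2$, and therefore the step order is $p=\min\{p_1,p_2\}=s+2$ by Definition~\ref{def4.1}.

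The main obstacle is the bookkeeping of the local assumption rather than any computation: one has to see that $p_2=s+2$ genuinely requires the $O(h^{s+2})$ local bound on $\bm{Y}_n$ (the $O(h^{s+1})$ bound used in Theorem~\ref{genorder} only gives $p_2=s+1$), and that this sharper bound is legitimate precisely because \eqref{eq17} subsumes \eqref{eq16a}, so that Theorem~\ref{super1} and Remark~\ref{remarka4} apply. Beyond that, the only real content is reducing the two iterated-integral conditions \eqref{eq17} to the single vanishing double integral above; the rest is the integrate--rescale--Taylor routine already used twice, together with the usual tacit uniform bounds on $g$ and $g'$.
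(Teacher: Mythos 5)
Your proof is correct and follows essentially the same route as the paper's: the sharper local bound $\bm{Y}_n-y(\bm{e}t_n+\bm{c}h)=O(h^{s+2})$ (which the paper obtains from Remark~\ref{remarka4}, i.e.\ already from \eqref{eq14b} in Theorem~\ref{genorder}, so invoking Theorem~\ref{super1} for it is unnecessary though harmless), the factorization $R''(t)=g(t)\prod_{i=1}^s(t-c_ih)+O(h^{s+2})$, the integrate--rescale--Taylor step for $R'(h)$ using both conditions in \eqref{eq17}, and the Fubini reduction of the double integral to $\int_0^1(1-\xi)\prod_{i=1}^s(\xi-c_i)\,d\xi=0$. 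There are no substantive differences.
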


\begin{proof}
From the relation $\bm{Y}_n - y(\bm{c}h)= O(h^{s+2})$ (see Remark \ref{remarka4}) and Taylor expansions of $f(t,y)$, one gets
\begin{equation}
\label{mq2}
f(\bm{c}h,\bm{Y}_{n})-f(\bm{c}h,y(\bm{c}h))= O(h^{s+2}).
\end{equation}
Let $R(t)$ be defined as in Theorem \ref{genorder}, i.e., $R(t):=u(t) - y(t)$. Then, we have $R(0)=0$ and $R'(0)=0$. It follows from \eqref{mq2} that 
$$
R''(c_i h) = u''(c_i h) - y''(c_i h) = f(c_ih, Y_{n,i})-f(h,y(c_i h))= O(h^{s+2}), \qquad i=1,...,s.
$$  
Thus, there exists a sufficiently smooth function $w(t)$ such that $w(c_ih)=0$, $i=1,...,s$, and
\begin{equation}
R''(t)=w(t) +O(h^{s+2}),\qquad 0\le t \le t_0 +T.
\end{equation}
 Again, there exists 
a sufficiently smooth function $g(t)$ such that $w(t)=g(t)\prod_{i=1}^s(t-c_ih)$. 
Therefore,
\begin{equation}
\label{aex2}
R''(t) = g(t)\prod_{i=1}^s(t-c_ih) + O(h^{s+2}),\qquad 0\le t \le t_0 +T.
\end{equation}
Equation \eqref{aex2} and the relation $R'(0)=0$ imply
\begin{equation}
\label{ae2}
R'(x)=\int_0^xR''(\xi)\,d\xi =\int_0^x\bigg{(}
g(\xi)\prod_{i=1}^s(\xi -c_ih)+O(h^{s+2})\bigg{)}\,d\xi.
\end{equation}
Assigning $x:=h$ in the above equality, we obtain
\begin{equation}
\label{intererror2}
u'(h) - y'(h)=R'(h) = h^{s+1}\int_{0}^{1}
g(\xi h)\prod_{i=1}^s(\xi -c_i)\,d\xi + \int_0^{h}O(h^{s+2})\, d\xi.
\end{equation}
Using the Taylor expansion $g(\xi h)=g(0)+\xi hg'(0)+O(h^2)$ and the relation $\int_0^{h}O(h^{s+2})\, d\xi = O(h^{s+3})$, we have
\begin{align*}
u'(h)-y'(h)&=h^{s+1}g(0)\int_0^{1}
\prod_{i=1}^s(\xi -c_i)\,d\xi +h^{s+2}g'(0)
\int_0^{1}\xi \prod_{i=1}^s(\xi -c_i)\,d\xi  + O(h^{s+3}).
\end{align*}
This and orthogonality condition \eqref{eq17} imply
\begin{equation}
\label{ae4}
y'_{n+1} - y'(t_{n+1}) = u'(h)-y'(h) = O(h^{s+3}). 
\end{equation}

Similarly, from equation \eqref{aex2} and the relations $R(0) = 0$ and $R'(0) = 0$, one gets
\begin{equation}
\label{eq19}
R(x) = \int_0^x\int_0^t R''(\xi) \,d\xi dt= \int_0^x\int_0^t\bigg{(}
g(\xi)\prod_{i=1}^s(\xi -c_ih)+O(h^{s+2})\bigg{)}\,d\xi dt.
\end{equation}
From equation \eqref{eq19}, the relation $\int_0^h\int_0^t O(h^{s+2})\, d\xi dt = O(h^{s+4})$, Fubini's Theorem, the Maclaurin expansion $g(\xi h) = g(0) + O(h)$, and orthogonality condition \eqref{eq17}, one gets
\begin{equation}
\label{ae7x}
\begin{split}
y_{n+1} - y (t_{n+1}) =R(h) &= h^{s+2}\int_0^1\int_0^t g(\xi h)\prod_{i=1}^s(\xi -c_i)\,d\xi dt +O(h^{s+4})\\
&= h^{s+2}\int_0^1\int_\xi^1 g(\xi h)\prod_{i=1}^s(\xi -c_i)\,dt d\xi +O(h^{s+4})\\
&= h^{s+2}\int_0^1 g(\xi h)(1-\xi)\prod_{i=1}^s(\xi -c_i)\, d\xi +O(h^{s+4}) \\
&= h^{s+2}g(0)\bigg(\int_0^1 \prod_{i=1}^s(\xi -c_i)\, d\xi - \int_0^1 \xi\prod_{i=1}^s(\xi -c_i)\, d\xi\bigg) +O(h^{s+3})  \\
&= O(h^{s+3}).
\end{split}
\end{equation}
From \eqref{ae4}, \eqref{ae7x}, and Definition \ref{def4.1}, one concludes that the method has step order $p=s+2$. 

Theorem \ref{super2} is proved. 
\end{proof}

\begin{rem}
From \eqref{intererror} with $w=1+c_i$, $i=1,...,s$, and the relation $\int_0^{\omega h}\int_0^t O(h^{s+1})\, d\xi dt = O(h^{s+3})$, we have
\begin{equation}
\begin{split}
\bm{Y}_{n+1}-y(\bm{e}t_{n+1}+\bm{c}h) &= u(\bm{e}h +\bm{c}h) -  y(\bm{e}h +\bm{c}h) \\
&=
h^{s+2}C_{n+1}\int_{\bm{0}}^{\bm{e}+\bm{c}}\int_0^t
\prod_{i=1}^s(\xi -c_i)\,d\xi dt + O(h^{s+3}),\qquad n=0,1,...\,.
\end{split}
\end{equation}
If $\bm{Y}_0$ is computed at a local accuracy order of $s+2$ at the initial step of computation, i.e., $\bm{Y}_0 - y(\bm{e}t_0 + \bm{c}h) = O(h^{s+3})$, then we will have
\begin{equation}
\label{stagerrorx}
\bm{Y}_{n}-y(\bm{e}t_{n} + \bm{c}h)=
h^{s+2}C_{n}\int_{\bm{0}}^{\bm{e}+\bm{c}}\int_0^t
\prod_{i=1}^s(\xi -c_i)\,d\xi dt + O(h^{s+3}),\qquad n=0,1,...\, .
\end{equation}
\end{rem}

\begin{thm}
\label{super3}
If an $s$-stage FEPTRKN 
method has collocation parameters $(c_i)_{i = 1}^s$ that satisfy
\begin{equation}
\label{eq20}
\int_{0}^1\int_{0}^{1+x}\int_0^t\prod_{i=1}^s(\xi-c_i)\,d\xi dt dx = 0,\qquad
\int_{0}^1 \xi^k\prod_{i=1}^s(\xi-c_i)\,d\xi = 0,\quad k=0,1,2,
\end{equation}
then the method has step order $p=s+3$. 
\end{thm}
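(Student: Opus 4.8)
The plan is to refine the proof of Theorem~\ref{super2} by one order, but instead of merely using the bound $\bm Y_n-y(\bm e t_n+\bm c h)=O(h^{s+2})$ I would exploit the sharper expansion \eqref{stagerrorx} of the stage error (which presupposes, as in Remark~\ref{remarka4}, that $\bm Y_0$ is computed with local accuracy order $s+2$). Take $t_n=0$ without loss of generality, let $u\in\bm H$ be the collocation solution as in \eqref{ae9}, and set $R:=u-y$, so $R(0)=R'(0)=0$ and $R''(c_ih)=f(c_ih,Y_{n,i})-f(c_ih,y(c_ih))$. Writing $\Phi(w):=\int_0^w\int_0^t\prod_{i=1}^s(\xi-c_i)\,d\xi\,dt$, a polynomial of degree $s+2$ in $w$, relation \eqref{stagerrorx} gives $Y_{n,i}-y(c_ih)=h^{s+2}C_n\Phi(1+c_i)+O(h^{s+3})$; combining this with the Taylor expansion of $f$ in its second argument (the quadratic remainder is $O(h^{2s+4})$) and $f_y(c_ih,y(c_ih))=f_y(0,y(0))+O(h)$, one obtains
\[
R''(c_ih)=\kappa\,h^{s+2}\Phi(1+c_i)+O(h^{s+3}),\qquad i=1,\dots,s,
\]
with $\kappa:=C_nf_y(0,y(0))$ bounded.

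The key step is to peel off this explicitly known leading term before applying the argument of Theorem~\ref{super2}. Let $v$ be the polynomial of degree $\le s-1$ interpolating the values $\Phi(1+c_i)$ at the nodes $c_i$. Then $R''(c_ih)-\kappa h^{s+2}v(c_i)=O(h^{s+3})$, so by Remark~\ref{smooth1} there is a sufficiently smooth $\hat g$ with $R''(t)=\kappa h^{s+2}v(t/h)+\hat g(t)\prod_{i=1}^s(t-c_ih)+O(h^{s+3})$. Integrating this once and twice from $0$ (using $R(0)=R'(0)=0$) and substituting $t=\xi h$: the $\hat g$-term is treated exactly as in \eqref{intererror2}--\eqref{ae7x}, expanding $\hat g(\xi h)=\hat g(0)+\xi h\hat g'(0)+\tfrac12\xi^2h^2\hat g''(0)+O(h^3)$ and invoking the moment conditions $\int_0^1\xi^k\prod_{i=1}^s(\xi-c_i)\,d\xi=0$, $k=0,1,2$, of \eqref{eq20} (which also give $\int_0^1(1-\xi)\prod=\int_0^1\xi(1-\xi)\prod=0$), so this term contributes $O(h^{s+4})$ to both $R'(h)$ and $R(h)$; the $O(h^{s+3})$ remainder contributes $O(h^{s+4})$ after one integration and $O(h^{s+5})$ after two; and the $v$-term contributes $\kappa h^{s+4}\int_0^1\int_0^\theta v(u)\,du\,d\theta=O(h^{s+4})$ to $R(h)$ and the scalar $\kappa h^{s+3}\int_0^1 v(u)\,du$ to $R'(h)$.

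It therefore suffices to prove $\int_0^1 v(u)\,du=0$, and this is precisely where the first condition of \eqref{eq20} enters. Since $\Phi(1+\sigma)-v(\sigma)$ has degree $\le s+2$ and vanishes at $c_1,\dots,c_s$, we may write $\Phi(1+\sigma)=v(\sigma)+r(\sigma)\prod_{i=1}^s(\sigma-c_i)$ with $\deg r\le 2$; integrating over $[0,1]$ and using the $k=0,1,2$ moment conditions to annihilate $\int_0^1 r(\sigma)\prod_{i=1}^s(\sigma-c_i)\,d\sigma$ yields $\int_0^1 v(u)\,du=\int_0^1\Phi(1+\sigma)\,d\sigma=\int_0^1\int_0^{1+x}\int_0^t\prod_{i=1}^s(\xi-c_i)\,d\xi\,dt\,dx=0$. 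Hence $R'(h)=O(h^{s+4})$ and $R(h)=O(h^{s+4})$, i.e.\ $y'_{n+1}-y'(t_{n+1})=O(h^{s+4})$ and $y_{n+1}-y(t_{n+1})=O(h^{s+4})$, so the method has step order $p=s+3$ by Definition~\ref{def4.1}.

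I expect the main obstacle to be organizing the second paragraph cleanly and, above all, recognizing that the propagated leading stage error survives a double integration as the single scalar $\kappa h^{s+3}\int_0^1 v$, whose vanishing requires all three conditions of \eqref{eq20} simultaneously: the three moments collapse $\int_0^1 v$ onto the triple integral (via the degree-$2$ quotient $r$), and the triple-integral condition finishes the job; meanwhile $\int_0^1\xi^3\prod_{i=1}^s(\xi-c_i)\,d\xi$ may remain nonzero, which is consistent with step order exactly $s+3$ and no higher.
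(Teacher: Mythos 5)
Your proof is correct and follows essentially the same route as the paper's: both start from the refined stage-error expansion \eqref{stagerrorx}, decompose $R''$ into the propagated leading term plus $g(t)\prod_{i=1}^s(t-c_ih)$ plus an $O(h^{s+3})$ remainder, and then integrate once and twice, killing the $g$-term with the three moment conditions and the leading term with the triple-integral condition. The only (harmless) organizational difference is that the paper carries the leading term as $C\int_0^{h+x}\int_0^t\prod_{i=1}^s(\xi-c_ih)\,d\xi\,dt$ itself, so that after one integration the first condition of \eqref{eq20} applies verbatim, whereas you replace it by the degree-$(s-1)$ interpolant $v$ of $\Phi(1+c_i)$ at the nodes and then need the extra (correct) division-with-remainder argument, using the $k=0,1,2$ moments, to identify $\int_0^1 v$ with the triple integral.
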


\begin{proof}
%Equation \eqref{stagerrorx} with $t_{n+1}$ is replaced by $t_n$ is
%\begin{equation}
%\label{stagerror}
%\bm{Y}_n-y(\bm{c}h)=
%h^{s+2}C_{n}\int_{\bm{0}}^{\bm{e}+\bm{c}}\int_0^t
%\prod_{i=1}^s(\xi -c_i)\,d\xi dt + O(h^{s+3}),\quad n=1,2,....
%\end{equation}
Again, without loss of generality, we assume that $t_n = 0$. 
Let $R(t)$ be defined as in the proofs of Theorems \ref{genorder} and \ref{super2}, i.e., $R(t)=u(t) -y(t)$. 
Using Taylor expansions of $f(t,y)$ and \eqref{stagerrorx}, one gets
\begin{equation*}
\begin{split}
R''(\bm{c}h) = u''(\bm{c}h) - y''(\bm{c}h)= f(\bm{c}h,\bm{Y}_{n})-f(\bm{c}h,y(\bm{c}h))
=
h^{s+2} C
\int_{\bm{0}}^{\bm{e}+\bm{c}}\int_0^t\prod_{i=1}^s(\xi -c_i)\,d\xi dt + O(h^{s+3}),
\end{split}
\end{equation*}
where $C=C_n\frac{\partial f}{\partial y}(0,y(0))$. 
Thus, we have
$$
R(0) = 0,\qquad R'(0) = 0,\qquad R''(c_ih) = C
\int_{0}^{h+c_ih}\int_0^t\prod_{i=1}^s(\xi -c_ih)\,d\xi dt + O(h^{s+3}),\qquad i=1,...,s.
$$
Again, since $R''(t)$ is sufficiently smooth, there exists a smooth function $w(t)$ such that
\begin{equation}
\label{a11}
R''(x) = w(x) + C\int_{0}^{h+x}\int_0^t\prod_{i=1}^s(\xi -c_ih)\,d\xi dt + O(h^{s+3}),\qquad w(c_ih) = 0,\qquad i=1,...,s. 
\end{equation}
Since $w(c_ih)=0$, $i=1,...,s$, there exists a sufficiently smooth function $g(x)$ such that (cf. Remark~\ref{smooth1})
$$
w(x) = g(x)\prod_{i=1}^s(x-c_ih).
$$
This and \eqref{a11} imply
\begin{equation}
\label{eq22}
\begin{split}
R''(x)&= g(x)\prod_{i=1}^s(x -c_ih) + C\int_{0}^{h+x}\int_0^t\prod_{i=1}^s(\xi -c_ih) \,d\xi dt + O(h^{s+3}).
\end{split}
\end{equation}
Integrate equation \eqref{eq22} and use the relation $R'(0)=0$ to get
\begin{equation*}
\begin{split}
R'(x)&=\int_0^xR''(\tau)d\tau \\
%=\int_0^x\bigg{(} w(\tau) + C\int_{0}^{h+\tau}\int_0^t\prod_{i=1}^s(\xi -c_ih)\,d\xi dt + O(h^{s+3})\bigg{)}d\tau\\
&=\int_0^x\bigg{(}
g(\tau)\prod_{i=1}^s(\tau -c_ih) + C\int_{0}^{h+\tau}\int_0^t\prod_{i=1}^s(\xi -c_ih)\,d\xi dt  + O(h^{s+3}) \bigg{)}d\tau\\
&=\int_0^x
g(\tau)\prod_{i=1}^s(\tau -c_ih)\, d\tau + C\int_0^x\int_{0}^{h+\tau}\int_0^t\prod_{i=1}^s(\xi -c_ih)\,d\xi dt  d\tau + \int_0^xO(h^{s+3})\, d\tau.
\end{split}
\end{equation*}
Assigning $x:= h$ in the above equality, we obtain
\begin{equation}
\label{eq23}
\begin{split}
u'(h)-y'(h) = 
R'(h) &= h^{s+1}\int_0^1 g(h\xi)
\prod_{i=1}^s(\xi -c_i)\, d\xi \\
&\, + Ch^{s+3}\int_0^1\int_{0}^{1+\tau}\int_0^t\prod_{i=1}^s(\xi -c_i)\,d\xi dt  d\tau + \int_0^{h}O(h^{s+3})\,d\tau.
\end{split}
\end{equation}
Using the Taylor expansion $g(\xi h)=\sum_{k=0}^2 \frac{g^{(k)}(0)}{k!}(\xi h)^k +O(h^3)$, we get
\begin{align}
\label{mw1}
h^{s+1}\int_0^1g(h\xi)
\prod_{i=1}^s(\xi -c_i)\, d\xi = \sum_{k=0}^2 h^{s+1+k}\frac{g^{(k)}(0)}{k!}
\int_0^{1}\xi^k \prod_{i=1}^s(\xi -c_i)d\xi  + O(h^{s+4}).
\end{align}
Equations \eqref{eq23}--\eqref{mw1} and the relation $\int_0^{h}O(h^{s+3})\,d\tau=O(h^{s+4})$ imply
\begin{equation*}
\begin{split}
u'(h)-y'(h) 
 &= \sum_{k=0}^2 h^{s+1+k}\frac{g^{(k)}(0)}{k!}
\int_0^{1}\xi^k \prod_{i=1}^s(\xi -c_i)d\xi \\
&\, + Ch^{s+3}\int_0^1\int_{0}^{1+\tau}\int_0^t\prod_{i=1}^s(\xi -c_i)\,d\xi dt  d\tau + O(h^{s+4}).
\end{split}
\end{equation*}
This and orthogonality conditions in \eqref{eq20} imply
\begin{equation}
\label{ae7}
y'_{n+1} - y(t_{n+1}) = 
u'(h)-y'(h) = O(h^{s+4}). 
\end{equation}

From equation \eqref{eq22} and the relations $R(0)=0$ and $R'(0)=0$, one gets
\begin{equation}
\begin{split}
R(x)  &=\int_0^x\int_0^\zeta R''(\tau)d\tau d\zeta \\
%=\int_0^x\int_0^\zeta\bigg{(}
%w(\tau) + C\int_{0}^{h+\tau}\int_0^t\prod_{i=1}^s(\xi -c_ih)\,d\xi dt + O(h^{s+3})\bigg{)}d\tau d\zeta\\
&=\int_0^x\int_0^\zeta \bigg{(}
g(\tau)\prod_{i=1}^s(\tau -c_ih) + C\int_{0}^{h+\tau}\int_0^t\prod_{i=1}^s(\xi -c_ih)\,d\xi dt  + O(h^{s+3}) \bigg{)}d\tau d\zeta.
\end{split}
\end{equation}
This and the relation $\int_0^h\int_0^\zeta O(h^{s+3})\, d\tau d\zeta = O(h^{s+5})$ imply
\begin{equation}
\label{eq24}
\begin{split}
R(h) &=\int_0^h \int_0^\zeta
g(\tau)\prod_{i=1}^s(\tau -c_ih)\, d\tau d\zeta + C\int_0^h\int_0^\zeta\int_{0}^{h+\tau}\int_0^t\prod_{i=1}^s(\xi -c_ih)\,d\xi dt  d\tau d\zeta + O(h^{s+5}).
\end{split}
\end{equation}
%Here, we have used the relation that $\int_0^h \int_0^\zeta O(h^{s+3})\, d\tau d\zeta = O(h^{s+5})$. 
By substitutions, one gets
\begin{equation}
\label{eq25}
\int_0^h\int_0^\zeta\int_{0}^{h+\tau}\int_0^t\prod_{i=1}^s(\xi -c_ih)\,d\xi dt  d\tau d\zeta
= h^{s+4} \int_0^1\int_0^\zeta\int_{0}^{1+\tau}\int_0^t\prod_{i=1}^s(\xi -c_i)\,d\xi dt  d\tau d\zeta = O(h^{s+4}). 
\end{equation}
From Fubini's Theorem one obtains
\begin{equation}
\label{eq26}
\begin{split}
\int_0^h \int_0^\zeta g(\tau)\prod_{i=1}^s(\tau -c_ih)\, d\tau d\zeta 
&= \int_0^h \int_\tau^h g(\tau)\prod_{i=1}^s(\tau -c_ih)\, d\zeta d\tau \\
&= \int_0^h g(\tau)(h-\tau)\prod_{i=1}^s(\tau -c_ih)\,d\tau. 
\end{split}
\end{equation}
Using the substitution $\tau=h\xi$ and the Maclaurin expansion $g(t) = \beta_0 +\beta_1t + O(t^2)$, one gets
\begin{equation}
\label{eq27}
\begin{split}
\int_0^h g(\tau)(h-\tau)\prod_{i=1}^s(\tau -c_ih)\,d\tau = & h^{s+2}\int_0^1 g(\xi h)(1-\xi)\prod_{i=1}^s(\xi -c_i)\,d\xi\\
=& \sum_{k=0}^1 \beta_k h^{s+2+k}
\int_0^1 \xi^k\prod_{i=1}^s(\xi -c_i)\,d\xi \\
& - \sum_{k=0}^1 \beta_k h^{s+2+k}
\int_0^1 \xi^{k+1}\prod_{i=1}^s(\xi -c_i)\,d\xi + O(h^{s+4}).
\end{split}
\end{equation}
From equations \eqref{eq26}--\eqref{eq27} and orthogonality conditions in \eqref{eq20}, one obtains
$$
\int_0^h \int_0^\zeta g(\tau)\prod_{i=1}^s(\tau -c_ih)\, d\tau d\zeta  = O(h^{s+4}).
$$
This and equations \eqref{eq24}--\eqref{eq25} imply
\begin{equation}
\label{ae8}
y_{n+1} - y(t_{n+1}) = u(h) - y(h) = R(h) = O(h^{s+4}).
\end{equation}
It follows from \eqref{ae7}, \eqref{ae8}, and Definition \ref{def4.1} that the stage order of the method is $p = s+3$. 

Theorem \eqref{super3} is proved. 
\end{proof}

\begin{rem}\rm 
From Fubini's Theorem, we have
\begin{align*}
\int_{0}^1\int_{0}^{1+x}\int_0^t\prod_{i=1}^s(\xi-c_i)\, d\xi dt dx &= 
\int_{0}^1\int_{0}^{1+x}\int_\xi^{1+x}\prod_{i=1}^s(\xi-c_i)\, dt d\xi  dx \\
&= 
\int_{0}^1\int_{0}^{1+x}(1+x - \xi)\prod_{i=1}^s(\xi-c_i)\, d\xi  dx.
\end{align*}
If \eqref{eq20} holds, then by Fubini's Theorem one gets
\begin{align*}
\int_{0}^1\int_{0}^{1}(1+x - \xi)\prod_{i=1}^s(\xi-c_i)\, d\xi  dx &= \int_{0}^1\int_{0}^{1}(1+x - \xi)\prod_{i=1}^s(\xi-c_i)\,   dx d\xi\\
&= \int_{0}^1\bigg(\frac{3}{2}- \xi\bigg)\prod_{i=1}^s(\xi-c_i)\,  d\xi\\
&= \frac{3}{2}\int_{0}^1\prod_{i=1}^s(\xi-c_i)\,  d\xi - \int_{0}^1\xi\prod_{i=1}^s(\xi-c_i)\,  d\xi = 0.
\end{align*}
Moreover, using Fubini's Theorem, one gets
\begin{align*}
\int_{0}^1\int_{1}^{1+x}(1+x - \xi)\prod_{i=1}^s(\xi-c_i)\, d\xi  dx &= 
\int_{1}^2\int_{\xi-1}^{1}(1+x - \xi)\prod_{i=1}^s(\xi-c_i)\, dx d\xi\\
&= 
\int_{1}^2\frac{(\xi - 2)^2}{2}\prod_{i=1}^s(\xi-c_i)\, d\xi.
\end{align*}
Therefore, condition \eqref{eq20} is equivalent to
\begin{equation}
\label{v48}
\int_{1}^2(\xi- 2)^2\prod_{i=1}^s(\xi-c_i)\, d\xi = 0,\qquad
\int_{0}^1 \xi^k\prod_{i=1}^s(\xi-c_i)\,d\xi = 0,\qquad k=0,1,2.
\end{equation}
In practice, we use \eqref{v48} instead of \eqref{eq20} to find collocation parameters $(c_i)_{i=1}^s$. 
\end{rem}

\subsection{Stability}
\label{subsec:stability}

Applying an FEPTRKN method given by $(\bm{c},\bm{A},\bm{b},\bm{d})$ to the Dahlquist test equation
$$
y'' = \lambda y,\qquad y(0) = y_0,\qquad y'(0) = y'_0,
$$
we get the iterations
\begin{equation}
\label{v49}
\begin{split}
\bm{Y}_{n+1} &= \bm{e}y_{n+1} + \bm{c}hy'_{n+1}+ \lambda h^2\bm{A}(t_n,h)\bm{Y}_n,\\
y_{n+1} &= y_n + hy'_n+ \lambda h^2\bm{b}^T(t_n,h)\bm{Y}_n,\\
y'_{n+1} &= y'_n + \lambda h\bm{d}^T(t_n,h)\bm{Y}_n.
\end{split}
\end{equation}
This implies
\begin{equation}
\label{v49.1}
\begin{pmatrix}
\bm{Y}_{n+1}\\
y_{n+1}\\
hy'_{n+1}
\end{pmatrix}
= \bm{M}_{(t_n,h)}(z)
\begin{pmatrix}
\bm{Y}_{n}\\
y_{n}\\
hy'_{n}
\end{pmatrix},\qquad z = \lambda h^2,
\end{equation}
where
$$
\bm{M}_{(t_n, h)}(z)=
\begin{pmatrix}
z\big[\bm{A}(t_n,h) + \bm{e}\bm{b}^T(t_n,h) + \bm{c} \bm{d}^T(t_n,h)\big]&\bm{e} & \bm{e}+\bm{c}\\
z\bm{b}^T(t_n,h) &1 & 1\\
z\bm{d}^T(t_n,h)  &0 & 1\\
\end{pmatrix}.
$$

Observe that the formula of our amplification matrix 
$\bm{M}_{(t_n,h)}(z)$ differs from that in~\cite{cong99,cong01}, where 
they instead use the formulation
\begin{equation}
\label{v50}
\begin{pmatrix}
\bm{Y}_{n}\\
y_{n+1}\\
hy'_{n+1}
\end{pmatrix}
= \tilde{\bm{M}}(z) 
\begin{pmatrix}
\bm{Y}_{n-1}\\
y_{n}\\
hy'_{n}
\end{pmatrix},\qquad 
\tilde{\bm{M}}(z) = 
\begin{pmatrix}
z\bm{A}&\bm{e} & \bm{c} \\
z^2\bm{b}^T\bm{A} &1+z\bm{b}^T\bm{e} & 1+z\bm{b}^T\bm{c}\\
z^2\bm{d}^T\bm{A}  & z\bm{d}^T\bm{e} & 1+z\bm{d}^T\bm{c}\\
\end{pmatrix},\qquad z=\lambda h^2.
\end{equation}
Note that in~\cite{cong99,cong01} the coefficients $\bm{A}(t,h)$, $\bm{b}(t,h)$ and $\bm{d}(t,h)$ are independent of $t$ and $h$. 
From \eqref{v49}, we have
$$
\begin{pmatrix}
\bm{Y}_{n+1}\\
y_{n+1}\\
hy'_{n+1}
\end{pmatrix}
=
\begin{pmatrix}
z\bm{A} & \bm{e}& \bm{c} &\\ 
0 & 1 & 0\\
0 & 0 & 1
\end{pmatrix}
\begin{pmatrix}
\bm{Y}_{n}\\
y_{n+1}\\
hy'_{n+1}
\end{pmatrix}.
$$
This and equation \eqref{v50} imply 
\begin{equation}
\label{v51}
\begin{pmatrix}
\bm{Y}_{n+1}\\
y_{n+1}\\
hy'_{n+1}
\end{pmatrix}
=
\begin{pmatrix}
z\bm{A} & \bm{e}& \bm{c} &\\ 
0 & 1 & 0\\
0 & 0 & 1
\end{pmatrix}
\tilde{\bm{M}}(z)
\begin{pmatrix}
z\bm{A} & \bm{e}& \bm{c} &\\ 
0 & 1 & 0\\
0 & 0 & 1
\end{pmatrix}^{-1}
\begin{pmatrix}
\bm{Y}_{n}\\
y_{n}\\
hy'_{n}
\end{pmatrix}.
\end{equation}
From \eqref{v49.1} and \eqref{v51}, we have the similarity transformation
$$
\bm{M}(z) =
\begin{pmatrix}
z\bm{A} & \bm{e}& \bm{c} &\\ 
0 & 1 & 0\\
0 & 0 & 1
\end{pmatrix}
\tilde{\bm{M}}(z)
\begin{pmatrix}
z\bm{A} & \bm{e}& \bm{c} &\\ 
0 & 1 & 0\\
0 & 0 & 1
\end{pmatrix}^{-1}.
$$
This means that although the two formulations lead to different 
amplification matrices, the spectral radii of these matrices are the same. Note that for 
conventional EPTRKN methods, we have $\bm{d}^T\bm{e}=1$ and $\bm{b}^T\bm{e}=\frac{1}{2}$ and, therefore, the matrix $\tilde{\bm{M}}(z)$ can be simplified further.  However, for  
general basis functions, the coefficients $\bm{A}$, $\bm{b}$, and $\bm{d}$ of a FEPTRKN method depend on both $t$ and $h$ and we do 
not have $\bm{d}^T\bm{e}=1$ and $\bm{b}^T\bm{e}=\frac{1}{2}$.

The coefficients $\bm{b}(t_{n},h)$, $\bm{d}(t_{n},h)$, and
$\bm{A}(t_{n},h)$ are independent of $t_n$
for the so-called class of {\em separable} basis functions $\{u_i\}_{i=1}^s$, 
i.e., if we have
(see~\cite[Section 3]{nguyen14}) 
$$
\bm{u}(x+y) = \mbox{\boldmath$\cal F$}(y)\bm{u}(x), \quad \forall x,y \in \IR
$$
where $\bm{u} = (1,t, u_1, ..., u_s)^T$ and
\mbox{\boldmath$\cal F$} is a square matrix of size $(s+2)\times(s+2)$ with entries that 
are univariate functions. This class includes
the most common functions such as polynomial, exponential, and trigonometric 
functions. In this particular class, the stability region
of the method for a given stepsize $h$ can be defined as
\begin{equation}
\label{stabregion}
S_h := \{z\in (-\infty,0]: \rho(\bm{M}_h(z))\leq 1\}
\end{equation}
where $\rho(\bm{M}_h(z))$ denotes the spectral radius of $\bm{M}_h(z)$ and 
\begin{equation}
\label{ae1}
\bm{M}_{(h)}(z)=
\begin{pmatrix}
z\big[\bm{A}(h) + \bm{e}\bm{b}^T(h) + \bm{c} \bm{d}^T(h)\big]&\bm{e} & \bm{e}+\bm{c}\\
z\bm{b}^T(h) &1 & 1\\
z\bm{d}^T(h)  &0 & 1\\
\end{pmatrix}.
\end{equation}
It becomes possible to characterize the stability region 
numerically or even analytically for some special cases
(cf.~\cite{nguyen1}, \cite{nguyen3}, \cite{nguyenfeptrk}). Furthermore, if we expand the 
coefficients into Taylor series
\begin{align*}
a_{ij}&=a_{ij}^{(0)}+a_{ij}^{(1)}h+a_{ij}^{(2)}h^2+...,\\
b_i&=b_i^{(0)}+b_i^{(1)}h+b_i^{(2)}h^2+...,
\end{align*}
then it can be shown that the leading terms are constant and conform to 
the coefficients of the conventional EPTRKN method defined by $\bm{c}$ (cf. \cite{Ozawa2}). 
Thus, the coefficients $\bm{b}(t,h),\bm{d}(t,h),\bm{A}(t,h)$ converge to those of the 
conventional EPTRKN method when $h\to 0$. Consequently, 
the amplification matrix $\bm{M}_{(t_n,h)}(z)$ and the stability region of a FEPTRKN method converge to  
the amplification matrix $\bm{M}(z)$ and the stability region of the corresponding conventional EPTRKN method, respectively.

\section{Extensions}
\label{sec:extensions}

We now discuss some aspects that are essential to producing competitive
numerical codes.

\subsection{Variable stepsize}
\label{variablestepsize}

When the stepsize $h_n$ is accepted and the next stepsize $h_{n+1}$ 
is suggested, 
the values in the next step are computed by
\begin{equation}
\label{variEPTRKN}
\begin{split}
\bm{Y}_{n+1} &= \bm{e}y_{n+1} + h_{n+1}\bm{c}y'_{n+1}+
h_{n+1}^2\bm{A}(t_{n},h_{n},h_{n+1})f(\bm{e}t_{n}+\bm{c}h_n,\bm{Y}_{n}),\\
y_{n+2} &= y_{n+1} + h_{n+1}y'_{n+1} +h^2_{n+1}\bm{b}^T(t_{n+1},h_{n+1})f(\bm{e}t_{n+1}+\bm{c}h_{n+1},\bm{Y}_{n+1}),\\
y'_{n+2} &= y'_{n+1}  +h_{n+1}\bm{d}^T(t_{n+1},h_{n+1})f(\bm{e}t_{n+1}+\bm{c}h_{n+1},\bm{Y}_{n+1}).
\end{split}
\end{equation}
The coefficients $\bm{b}^T(t_{n+1},h_{n+1})$ and $\bm{d}^T(t_{n+1},h_{n+1})$ are computed from the first two equations in \eqref{sysABeq} with $t=t_{n+1}$ and $h=h_{n+1}$. 
However, we cannot solve for $\bm{A}(t_{n},h_{n},h_{n+1})$ from the third equation in \eqref{sysABeq} if $h_{n+1}\not=h_n$. 
%, i.e., from the equations
%\begin{equation}
%\begin{split}
%\Big{(}u_{1}(t_{n+1}+h_{n+1})-u_1(t_{n+1})-hu'_1(t_{n+1}), ..., u_{s}(t_{n+1}+h_{n+1})-u_s(t_{n%+1})-h_{n+1}u'_s(t_{n+1})\Big{)} 
%&= h^2\bm{b}^T(t, h)\bm{F}(t, h), \\
%\Big{(}u'_{1}(t+h)-u'_1(t), ..., u'_{s}(t+h)-u'_s(t)\Big{)} 
%&= h\bm{d}^T(t, h)\bm{F}(t, h). 
%\end{split}
%\end{equation}
In particular, 
the coefficients
 $\bm{A}(t_{n},h_{n},h_{n+1})$ are 
obtained by solving the systems
\begin{align*}
%\Big{(}u_{1}(t_{n+1})-u_1(t_{n})-h_nu'_1(t_{n}), ..., u_{s}(t_{n+1})-u_s(t_{n})-h_nu'_s(t_{n})\Big{)} 
%&= h_n\bm{b}(t_{n},h_{n})^T\bm{F}(t_n, h_n), \\
\Big{(}\bm{v}_1(t_{n},h_{n},h_{n+1}), ...,
\bm{v}_s(t_{n},h_{n},h_{n+1}) \Big{)} 
&= h_{n+1}^2\bm{A}(t_{n},h_{n},h_{n+1})\bm{F}(t_{n}, h_{n}),
\end{align*}
where
$$
\bm{v}_i(t_{n},h_{n},h_{n+1}) = u_{i}(\bm{e}t_{n+1}+\bm{c}h_{n+1})-u_i(\bm{e}t_{n+1})-\bm{c}h_{n+1}u'_i(t_{n+1}),\qquad i=1,...,s.
$$

Note that even a conventional variable stepsize
EPTRKN induces variable coefficients. However, $\bm{b}$ and $\bm{d}$ remain constant
and the update for $\bm{A}$ leads to a simple diagonal scaling~\cite{cong01}. 
The next approximations $y_{n+2} \approx y(t_{n+1}+h_{n+1})$ and $y'_{n+2} \approx y'(t_{n+1}+h_{n+1})$ resulting from 
the new stepsize $h_{n+1}$ are subject to a local truncation error denoted by LTE 
which is often estimated by using another embedded method (see 
Section~\ref{subsec:embedded} below). If the estimated error LTE
is smaller than a prescribed tolerance TOL, then $h_{n+1}$ is accepted. 
Otherwise, it is rejected and a reduced stepsize $\tilde{h}_{n+1}$ is
used to recompute $\bm{Y}_{n+1}\approx 
y(\bm{e}t_{n+1}+\bm{c}\tilde{h}_{n+1})$, $y'_{n+2}\approx 
y'(t_{n+1}+\tilde{h}_{n+1})$, and $y_{n+2}\approx 
y(t_{n+1}+\tilde{h}_{n+1})$.
This process is repeated until an accepted value of $h_{n+1}$ is found.

Varying the stepsize from a collocation perspective means that, 
from the past accepted values $y_{n}$, $y'_{n}$, and $\bm{Y}_n$ we construct the 
collocation solution $u(t)$ defined in \eqref{interpol} and then evaluate and store the values $y_{n+1}=u(t_n+h_n)$, $y'_{n+1}=u'(t_n+h_n)$. The next stage values $\bm{Y}_{n+1}$  
%$y_{n+2}$, $y'_{n+2}$
are generated from this collocation function using an estimated stepsize $h_{n+1}$ (see Section \ref{errorcontrol} below). 
The acceptance of this $h_{n+1}$ is subject to a local truncation error for computing $y_{n+2}$ using this stepsize (cf. Section \ref{errorcontrol}). 
Therefore, the collocation function $u(t)$ and values $y_{n+1}$, $y'_{n+1}$ remain the same when $h_{n+1}$ varies. We only
adjust the stepsize $h_{n+1}$ for computing $\bm{Y}_{n+1}$, specifically, 
%$y_{n+2} = u(t_{n+1}+h_{n+1})$, $y'_{n+2} = u'(t_{n+1}+h_{n+1})$,  and 
$\bm{Y}_{n+1} = u(\bm{e}t_{n+1}+\bm{c}h_{n+1})$, see \eqref{iteratepoly}. 
As a consequence, the following generalization 
of~\cite[Theorem~2.1]{cong01} holds.

\begin{thm}
The $s$-stage variable stepsize
FEPTRKN method \eqref{variEPTRKN} is of stage order $p=s$ and of step order at 
least $p=s$ for any set of distinct collocation points $(c_i)_{i=1}^s$. It has step order $p=s+q$, $q=1,2$, if the $(c_i)_{i=1}^s$ satisfy
the orthogonality conditions
$$
\int_0^1\xi^k\prod_{i=1}^s(\xi-c_i)d\xi = 0,\qquad k=0,q-1.
$$
\end{thm}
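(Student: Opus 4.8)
The plan is to reduce the variable-stepsize statement to the fixed-stepsize results already proved in Theorems~\ref{genorder}, \ref{super1}, and \ref{super2}, by exploiting the collocation interpretation of the method. The key observation, already stressed in the preceding discussion, is that the collocation function $u(t)\in\bm{H}$ built from the past values $y_n$, $y'_n$, $\bm{Y}_n$ via \eqref{interpol} does not depend on the new stepsize $h_{n+1}$ at all: it is determined by $t_n$ and $h_n$ alone. The quantities $y_{n+1}=u(t_{n+1})$ and $y'_{n+1}=u'(t_{n+1})$ with $t_{n+1}=t_n+h_n$ are exactly the fixed-stepsize outputs, so the error bounds \eqref{eq14c}, \eqref{ae6} (and their refinements under orthogonality) apply verbatim with $h$ replaced by $h_n$. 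The only genuinely new object is the stage vector $\bm{Y}_{n+1}=u(\bm{e}t_{n+1}+\bm{c}h_{n+1})$, obtained by evaluating the \emph{same} collocation polynomial at the shifted abscissae $t_{n+1}+c_ih_{n+1}$.

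First I would set up the induction hypothesis in the spirit of Remark~\ref{remarka4}: assume $\bm{Y}_n-y(\bm{e}t_n+\bm{c}h_n)=O(h^{s+2})$ where $h=\max_j h_j$ is the current stepsize scale (one must be mildly careful that all the $h_j$ over a bounded window are comparable, which is standard for variable-stepsize analysis and is implicit in \cite{cong01}). Then, exactly as in the proof of Theorem~\ref{genorder}, write $R(t)=u(t)-y(t)$, obtain $R(t_n)=0$, $R'(t_n)=0$, and $R''(t_n+c_ih_n)=O(h^{s+1})$ from the ODE and Taylor expansion of $f$ in $y$; deduce $R''(t)=g(t)\prod_{i=1}^s(t-t_n-c_ih_n)+O(h^{s+1})$ via Remark~\ref{smooth1}. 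Integrating twice from $t_n$ and evaluating at the points $t_{n+1}+c_ih_{n+1}$, i.e.\ at $t_n+(1+c_i)h_{n+1}$ after translating $t_n\to0$, gives
\begin{equation*}
\bm{Y}_{n+1}-y(\bm{e}t_{n+1}+\bm{c}h_{n+1})
= \int_0^{(1+\bm{c})h_{n+1}}\!\!\int_0^t\Big(g(\tau)\prod_{i=1}^s(\tau-c_ih_n)+O(h^{s+1})\Big)d\tau\,dt,
\end{equation*}
which is $O(h^{s+2})$ since the integrand is $O(h^s)$ on an interval of length $O(h)$; this both closes the induction and establishes stage order $r=s$. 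The step-order claims for $y_{n+2}$ and $y'_{n+2}$ then follow by applying Theorems~\ref{genorder}, \ref{super1}, \ref{super2} at the step from $t_{n+1}$ to $t_{n+1}+h_{n+1}$: the stage input $\bm{Y}_{n+1}$ has just been shown to satisfy the required $O(h^{s+2})$ local accuracy, so those theorems apply with the coefficients $\bm{b}(t_{n+1},h_{n+1})$, $\bm{d}(t_{n+1},h_{n+1})$, and the orthogonality conditions deliver the $O(h^{s+1+q})$ improvements for $q=1,2$.

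The main obstacle is bookkeeping rather than a conceptual leap: one must carefully track that the polynomial factor $\prod(\tau-c_ih_n)$ comes with the \emph{old} stepsize $h_n$ while the evaluation points carry the \emph{new} stepsize $h_{n+1}$, so the usual substitution $\tau=\xi h$ no longer perfectly normalizes the integral to $\int_0^1\xi^k\prod(\xi-c_i)\,d\xi$. For the base stage-order result this mismatch is harmless — everything is still $O(h^{s+2})$ regardless — but it means the variable-stepsize theorem only claims step order $p=s+q$ for $q=1,2$ and not the superconvergence order $s+3$ of Theorem~\ref{super3}, because that highest-order argument genuinely exploited the precise form \eqref{stagerrorx} of the stage error with a single consistent stepsize. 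I would therefore state explicitly that the argument yields $q=1,2$, note that $q=2$ uses only the two orthogonality conditions $\int_0^1\xi^k\prod(\xi-c_i)\,d\xi=0$ for $k=0,1$ (the same ones as in Theorem~\ref{super2}, applied at the $t_{n+1}$ step to $y'$ via \eqref{ae4} and to $y$ via \eqref{ae7x}), and remark that this recovers \cite[Theorem~2.1]{cong01} when $\{u_i\}=\{t^2,\dots,t^{s+1}\}$.
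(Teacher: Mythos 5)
Your proposal is correct and follows essentially the same route as the paper, which proves this theorem only by the observation you make central: the collocation solution $u(t)$ from step $n$ is unchanged when $h_{n+1}$ varies, so $y_{n+1},y'_{n+1}$ inherit the fixed-stepsize bounds and only the stage vector $\bm{Y}_{n+1}=u(\bm{e}t_{n+1}+\bm{c}h_{n+1})$ needs the re-evaluation argument you supply (your display's upper limit should read $h_n+c_ih_{n+1}$ rather than $(1+c_i)h_{n+1}$ after translating $t_n\to 0$, but this does not affect the $O(h^{s+2})$ count). Your added remark on why the $q=3$ superconvergence of Theorem~\ref{super3} does not carry over to variable stepsizes is consistent with the paper's restriction to $q=1,2$.
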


%It should be noted that the condition that $h_{n+1}/h_n$ is bounded from 
%above is a reasonable asumption for a numerical code. 
%In practice, the new stepsize $h_{n+1}$ is often chosen such that 
%$1/2\leq h_{n+1}/h_n\leq 2$.

\subsection{Interpolation}
\label{interpolation}

A continuous extension of an $s$-stage FEPTRKN method determined by $(\bm{c},\bm{A}(t_n,h_n),\bm{b}(t_n,h_n),\bm{d}(t_n,h_n))$ is defined  
as follows (cf. \cite{cong01}, \cite{nguyenfeptrk})
\begin{align}
\label{denseoutput}
y_{n+\xi} &= y_{n} + \xi h_ny'_n+ (\xi h_n)^2\bm{b}^T(t_n,h_n,\xi)
f(\bm{e}t_{n}+\bm{c}h,\bm{Y}_{n}),\\
\label{denseoutput1}
y'_{n+\xi} &= y'_{n} + \xi h_n\bm{d}^T(t_n,h_n,\xi)
f(\bm{e}t_{n}+\bm{c}h,\bm{Y}_{n}),\qquad 0\leq \xi \leq 1,
\end{align}
where $y_{n+\xi}\approx y(t_{n+\xi})$ and $y'_{n+\xi}\approx y'(t_{n+\xi})$ with 
$t_{n+\xi}:=t_n+\xi h_n$, and the coefficients 
$\bm{b}(t_n,h_n,\xi)$ and $\bm{d}(t_n,h_n,\xi)$ are obtained from the equations
\begin{equation}
\label{densepara}
\begin{split}
\Big{(}u_{1}(t_{n+\xi})-u_1(t_{n})-\xi h_nu'_1(t_{n}), ..., 
u_{s}(t_{n+\xi})-u_s(t_{n})-\xi h_nu'_s(t_{n})\Big{)} 
&= (\xi h_n)^2\bm{b}^T(t_n,h_n,\xi)\bm{F}(t_n, h_n),\\
\Big{(}u_{1}(t_{n+\xi})-u_1(t_{n}), ..., 
u_{s}(t_{n+\xi})-u_s(t_{n})\Big{)} 
&= \xi h_n\bm{d}^T(t_n,h_n,\xi)\bm{F}(t_n, h_n).
\end{split}
\end{equation}

In other words, any $y_{n+\xi}$ and $y'_{n+\xi}$ are retrieved as if $t_{n+\xi}=t_n+\xi h_n$
was the end point. From their definitions one has 
$y_{n+\xi}=u(t_n+\xi h_n)$ and $y'_{n+\xi}=u'(t_n+\xi h_n)$, where 
$u(t)$ is the collocation solution defined in 
\eqref{interpol}. From \eqref{intererror} it is easy to check that
$y_{n+\xi}-y(t_n+\xi h_n)=O(h_n^{s+2})$ and
$y'_{n+\xi}-y'(t_n+\xi h_n)=O(h_n^{s+1})$, $\forall \xi \in [0,1]$.
Hence, the following result holds.

\begin{thm} The FEPTRKN method defined by \eqref{denseoutput}--\eqref{denseoutput1} with 
$\bm{b}$ and $\bm{d}$ defined by \eqref{densepara} produces a continuous 
FEPTRKN method of order $s$, i.e.,
$$
y(t_n+\xi h_n)-y_{n+\xi}=O(h_n^{s+2}),\quad y'(t_n+\xi h_n)-y'_{n+\xi}=O(h_n^{s+1}),\qquad 0 \le \xi \le 1.
$$
\end{thm}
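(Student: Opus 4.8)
The plan is to reduce the statement to the collocation solution $u(t)\in\bm{H}$ of \eqref{interpol}, exactly as in the proof of Theorem~\ref{collofuns}, and then read off the interpolation-error bounds already derived inside the proof of Theorem~\ref{genorder}. Throughout I work under the same local assumptions as in that proof, namely $y_n=y(t_n)$, $y_n'=y'(t_n)$, and $\bm{Y}_n-y(\bm{e}t_n+\bm{c}h_n)=O(h_n^{s+1})$.

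The first and only substantive step is to show that the dense output coincides with the collocation polynomial, i.e. $y_{n+\xi}=u(t_n+\xi h_n)$ and $y'_{n+\xi}=u'(t_n+\xi h_n)$ for every $\xi\in[0,1]$. Write $u(t)=\alpha_0+a_0t+\sum_{i=1}^s a_iu_i(t)$ via Lemma~\ref{newinter}. The defining relations \eqref{densepara} of $\bm{b}(t_n,h_n,\xi)$ and $\bm{d}(t_n,h_n,\xi)$ state, componentwise, that for each $i=1,\dots,s$
\[
u_i(t_{n+\xi})=u_i(t_n)+\xi h_nu_i'(t_n)+(\xi h_n)^2\bm{b}^T(t_n,h_n,\xi)u_i''(\bm{e}t_n+\bm{c}h_n),
\]
together with the analogous identity for $u_i'$ with $\bm{d}$. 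Since the affine functions $1$ and $t$ satisfy both identities trivially (each side collapses to the same affine expression), taking the linear combination $\alpha_0\cdot 1+a_0\cdot t+\sum_i a_iu_i$ gives the same identities with $u_i$ replaced by $u$. Because $u$ is the collocation solution, $u(t_n)=y_n$, $u'(t_n)=y_n'$, and $u''(\bm{e}t_n+\bm{c}h_n)=f(\bm{e}t_n+\bm{c}h_n,\bm{Y}_n)$ by \eqref{interpol}; substituting these turns the two identities into exactly \eqref{denseoutput}--\eqref{denseoutput1}, so indeed $y_{n+\xi}=u(t_{n+\xi})$ and $y'_{n+\xi}=u'(t_{n+\xi})$.

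The error estimates then drop out of work already done. With $R(t):=u(t)-y(t)$ as in Theorem~\ref{genorder}, equation \eqref{intererror} gives $R(\omega h_n)=h_n^{s+2}\int_0^\omega\int_0^t g(\xi h_n)\prod_{i=1}^s(\xi-c_i)\,d\xi\,dt+O(h_n^{s+3})$ uniformly for $\omega$ in a bounded set; taking $\omega=\xi\in[0,1]$ and bounding the double integral yields $y(t_n+\xi h_n)-y_{n+\xi}=R(\xi h_n)=O(h_n^{s+2})$. For the derivative, integrating \eqref{eq14} from $0$ to $\xi h_n$ and substituting $t=\eta h_n$ gives $R'(\xi h_n)=h_n^{s+1}\int_0^\xi g(\eta h_n)\prod_{i=1}^s(\eta-c_i)\,d\eta+O(h_n^{s+2})=O(h_n^{s+1})$, hence $y'(t_n+\xi h_n)-y'_{n+\xi}=O(h_n^{s+1})$. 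Uniformity in $\xi$ is automatic since every integrand is continuous on a compact set.

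The only genuinely new point, and the only mild obstacle, is the first step: verifying that the $\xi$-parametrized coefficients of \eqref{densepara} really do make the continuous extension agree with the collocation polynomial evaluated at the intermediate node $t_n+\xi h_n$, i.e. that the interpolation picture of Theorem~\ref{collofuns} survives replacing the endpoint $t_n+h_n$ by $t_n+\xi h_n$. Once that bookkeeping is in place, Theorem~\ref{genorder} and its proof supply all the analysis; no Taylor expansion beyond what appears in \eqref{intererror} is needed. I would also note that the bound for $y_{n+\xi}$ is one power of $h_n$ better than that for $y'_{n+\xi}$, consistently with the step/stage convention of Definition~\ref{def4.1}, so ``of order $s$'' in the statement refers to the weaker estimate $y'(t_n+\xi h_n)-y'_{n+\xi}=O(h_n^{s+1})$.
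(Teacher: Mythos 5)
Your proposal is correct and follows essentially the same route as the paper: identify $y_{n+\xi}=u(t_n+\xi h_n)$ and $y'_{n+\xi}=u'(t_n+\xi h_n)$ with the collocation solution $u$ of \eqref{interpol}, then read the bounds off \eqref{intererror} and the integrated form of \eqref{eq14}. The paper merely asserts these two steps in the paragraph preceding the theorem ("From their definitions one has \dots" and "From \eqref{intererror} it is easy to check \dots"), whereas you supply the linearity argument via \eqref{densepara} and the affine functions $1,t$ that justifies the identification; this is exactly the intended argument, just written out in full.
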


%\begin{rem}{\rm 
%If $\int_0^1\prod (\xi - c_i)\, d\xi = 0$, then \eqref{densepara} produces a continuous FEPTRKN method of order $s+1$, i.e.,
%$$
%y(t_n+\xi h_n)-y_{n+\xi}=O(h_n^{s+2}),\quad 0 \le \xi \le 1.
%$$
%}
%\end{rem}

It can further be shown that when the collocation functions are 
the monomials, the coefficient $\bm{b}(t_n,h_n,\xi)$ depends 
only on $\xi$ (cf.~\cite{cong01}).

\subsection{Embedded methods}
\label{subsec:embedded}

\def\tc{\tilde{c}}
\def\ts{{\tilde{s}}}
\def\tY{\tilde{Y}}
\def\ty{\tilde{y}}
\def\tu{\tilde{u}}
\def\tbm#1{\tilde{\bm{#1}}}

Consider an $s$-stage FEPTRKN method with coefficients $(\bm{c},\bm{A},\bm{b},\bm{d})$.
We now wish to have an embedded FEPTRKN method ($\tbm{c},\tbm{A},\tbm{b},\tbm{d})$ paired with the FEPTRKN method $(\bm{c},\bm{A},\bm{b},\bm{d})$ 
to cheaply estimate the errors and control 
the stepsizes in practice.

Let $\ts < s$, $\{\tc_1, ..., \tc_\ts\}
\varsubsetneq \{c_1, ..., c_s\}$,
$\tbm{c} = (\tc_1, ..., \tc_\ts)^T$
and $\tbm{e} = (1, ..., 1)^T$ of length $\ts$. 
An embedded pair FEPTRKN methods in which 
another approximation $\ty_{n+1}$ to $y(t_{n+1})$ can 
be computed without extra function evaluations is defined by 
\begin{equation}
\label{embpair}
\begin{split}
y_{n+1} &= y_{n} +h_ny'_n+ h_n^2\bm{b}^T(t_n,h_n)f(\bm{e}t_{n}+\bm{c}h_n,\bm{Y}_{n}),\\
y'_{n+1} &= y'_{n} + h_n\bm{d}^T(t_n,h_n)f(\bm{e}t_{n}+\bm{c}h_n,\bm{Y}_{n}),\\
\ty_{n+1} &= y_{n} +h_ny'_n + h_n^2\tbm{b}^T(t_n,h_n)f(\tbm{e}t_{n}+\tbm{c}h_n,\tbm{Y}_{n}),\\
\bm{Y}_{n+1} &= \bm{e}y_{n+1} +\bm{c}h_ny'_{n+1} +  h_n^2\bm{A}(t_n,h_n)f(\bm{e}t_{n}+\bm{c}h_n,\bm{Y}_{n})
\end{split}
\end{equation}
where 
$\tbm{Y}_{n}=(\tY_{n,1},...,\tY_{n,\ts})$ 
and 
$f(\tbm{e}t_{n}+\tbm{c}h_n,\tbm{Y}_{n})
= 
(f(t_n+\tc_1h_n,\tY_{n,1}), ..., f(t_n+\tc_\ts h_n,\tY_{n,\ts}))^T 
$
are defined according to the mapping (which is well defined
because the $c_i$ are distinct)
\begin{align*}
\text{if}\quad \tc_i=c_j\quad \text{then}\quad 
\tY_{n,i}=Y_{n,j}.
\end{align*}
The coefficient $\tbm{b}(t_n,h_n)$ is obtained by solving the linear system
\begin{align*}
\Big{(}\tu_{1}(t_n+h_n)-\tu_1(t_{n}) -h_n\tu'_1(t_{n}), ..., 
\tu_{\ts}(t_{n}+h_n)-\tu_\ts(t_{n})-h_n\tu'_\ts(t_{n})\Big{)} 
&= h_n^2\tbm{b}^T(t_n,h_n)\tbm{F}(t_n, h_n)
\end{align*}
where
\begin{align*}
\tbm{F}(t_n,h_n)&=
\bigg{(}\tu''_{1}(\tbm{e}t_n+\tbm{c}h_n),
...,
\tilde{u}''_{\ts}(\tbm{e}t_n+\tbm{c}h_n)\bigg{)}.
\end{align*}

In other words, $\tbm{b}$ is the coefficient of the FEPTRKN method 
generated from $\tbm{c}$ and the basis of functions 
$\{\tu_1, ..., \tu_\ts\} \varsubsetneq \{u_1, ..., u_s\}$.
The solution $\ty_{n+1}$ is computed by combining the subset 
of past stage values with indices corresponding to $\tbm{c}$.
For this definition of $\tbm{b}$ we can assert that 
$\ty_{n+1}=y(t_{n+1})+O(h^{\ts+1})$ as a result of 
Theorem~\ref{genorder}.

\begin{thm}
An $s$-stage embedded pair FEPTRKN \eqref{embpair} 
produces solutions $y_{n+1}$ and $\ty_{n+1}$ that satisfy
$$
y_{n+1}-\ty_{n+1}=O(h^{\ts+1}),
$$
for all set of distinct collocation parameters $(c_i)_{i=1}^s$.
\end{thm}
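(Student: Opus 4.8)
The plan is to estimate $y_{n+1}-\ty_{n+1}$ by inserting the exact value $y(t_{n+1})$ and controlling the two resulting local errors separately, each by one application of Theorem~\ref{genorder}. Thus I would write $y_{n+1}-\ty_{n+1}=(y_{n+1}-y(t_{n+1}))-(\ty_{n+1}-y(t_{n+1}))$ and bound each bracket.

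First I would note that $y_{n+1}$ (together with $y'_{n+1}$ and $\bm{Y}_{n+1}$) in \eqref{embpair} is exactly the step update produced by the $s$-stage FEPTRKN method $(\bm{c},\bm{A},\bm{b},\bm{d})$. Under the localizing hypotheses of Definition~\ref{def4.1} — $y_n=y(t_n)$, $y'_n=y'(t_n)$, and $\bm{Y}_n-y(\bm{e}t_n+\bm{c}h)=O(h^{s+1})$ — Theorem~\ref{genorder} gives $y_{n+1}-y(t_{n+1})=O(h^{s+1})$.

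Next I would recognize $\ty_{n+1}$ as the step update of the $\ts$-stage FEPTRKN method built from the abscissae $\tbm{c}$ and the sub-basis $\{\tu_1,\dots,\tu_\ts\}\varsubsetneq\{u_1,\dots,u_s\}$, whose weight vector $\tbm{b}(t_n,h_n)$ is precisely the one defined just above the theorem and whose underlying collocation solution lives in $\Span\{1,t,\tu_1,\dots,\tu_\ts\}$. To invoke Theorem~\ref{genorder} for this reduced method I must check its localizing hypothesis on $\tbm{Y}_n$: by the matching rule $\tY_{n,i}=Y_{n,j}$ when $\tc_i=c_j$, the vector $\tbm{Y}_n-y(\tbm{e}t_n+\tbm{c}h)$ is a sub-vector of $\bm{Y}_n-y(\bm{e}t_n+\bm{c}h)=O(h^{s+1})$, and since $\ts<s$ this is in particular $O(h^{\ts+1})$. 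Theorem~\ref{genorder}, applied with $s$ replaced by $\ts$, then yields $\ty_{n+1}-y(t_{n+1})=O(h^{\ts+1})$.

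Subtracting, $y_{n+1}-\ty_{n+1}=O(h^{s+1})+O(h^{\ts+1})=O(h^{\ts+1})$ because $\ts<s$; no orthogonality condition on $(c_i)_{i=1}^s$ is needed since Theorem~\ref{genorder} imposes none, so the estimate holds for every set of distinct collocation parameters. The only step with any real content will be the transfer of the localizing assumption from $\bm{Y}_n$ to $\tbm{Y}_n$ — i.e. verifying that passing to the sub-vector indexed by $\tbm{c}$ preserves the order estimate — and that is immediate from the way the embedded stage values are selected; everything else is Landau-symbol bookkeeping.
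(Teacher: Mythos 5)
Your proposal is correct and matches the paper's (implicit) argument: the paper gives no separate proof but states just before the theorem that $\ty_{n+1}=y(t_{n+1})+O(h^{\ts+1})$ ``as a result of Theorem~\ref{genorder},'' which combined with $y_{n+1}-y(t_{n+1})=O(h^{s+1})$ for the full $s$-stage method is exactly your triangle-inequality decomposition. Your explicit check that the localizing hypothesis passes to the sub-vector $\tbm{Y}_n$ via the matching rule $\tY_{n,i}=Y_{n,j}$ is a detail the paper leaves unstated, and it is the right one to verify.
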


%This way of doing so allows for many embedded methods of arbitrary order, 
%with a much simpler derivation than other approaches such 
%as~\cite{Nakashima1991}. The focus instead shifts to choosing a good 
%$\tbm{c}$. Once again, this collocation technique give us advantages 
%that we can not have with conventional explicit Runge-Kutta-Nystr\"{o}m methods.

\subsection{Error control and step-size change}
\label{errorcontrol}

Let us discuss a strategy for changing stepsizes in the implementation of 
a FEPTRKN method of order $p$ embedded with a FEPTRKN method of order $\tilde{p}<p$ using the variable stepsize technique in Section \ref{variablestepsize}.
 At each step we compute a local truncation error LTE as follows
\begin{equation}
\label{LTE1}
\text{LTE} = \|y_{n+1} - \tilde{y}_{n+1}\| = O(h^{\tilde{p}+1}). 
\end{equation}
Formula \eqref{LTE1} for computing $\text{LTE}$ is simpler than the one in \cite{cong01} where $\text{LTE}$ is computed by
\begin{equation}
\label{LTE2}
\text{LTE} = \sqrt{\|y_{n+1} - \tilde{y}_{n+1}\|^2 + \|y'_{n+1} - \tilde{y}'_{n+1}\|^2}= O(h^{\tilde{p}+1}). 
\end{equation}
Here $\tilde{y}'_{n+1}$ is computed by
$$
\ty'_{n+1} = y'_{n} + h_n\tbm{d}^T(t_n,h_n)f(\tbm{e}t_{n}+\tbm{c}h_n,\tbm{Y}_{n}),
$$
where $\tbm{Y}_{n}$ is defined as in Section \ref{subsec:embedded} and  ($\tbm{c},\tbm{A},\tbm{b},\tbm{d})$ are the coefficients of the embedded method.  

In our experiments we have found out that using \eqref{LTE2} instead of \eqref{LTE1} for computing $\text{LTE}$ results in having smaller stepsizes, and, therefore, yields smaller errors. However, having smaller stepsizes makes the FEPTRKN methods use more function evaluations. From numerical experiments, we do not see any advantage of using \eqref{LTE2} over using \eqref{LTE1} for computing $\text{LTE}$.   

A stepsize $h_n$ is accepted if $\text{LTE}\le\text{TOL}$ and rejected if otherwise. 
If $h_n$ is rejected, then we repeat the computations with the new stepsize $h_n=h_n/2$ until an 
accepted $h_n$ is found. 
If $h_n$ is accepted, then the stepsize $h_{n+1}$ in the next step is chosen as 
$$
h_{n+1} = h_n \min\bigg\{2,\max\big\{0.5,0.8\big(\text{TOL}/\text{LTE}\big)^{1/(\tilde{p}+1)}\big\}\bigg\}. 
$$
It follows from this formula that the ratio $\frac{h_{n+1}}{h_n}$ always stays in the interval $[0.5,2]$.  
This stepsize changing technique was also used in \cite{cong01}. 

\section{Numerical experiments}

The experiments were conducted in double precision (machine precision = $0.2\times 10^{-15}$) using MATLAB on a PC computer with 2Gb RAM and 2.8 GHz.

\subsection{Derivation of some methods}
\label{derivation}

We derived some methods with the following bases:
\begin{enumerate}
\item{\{$x^2,x^3,x^4$\} for eptrkn52.}
\item{\{$x^2,\cos(\omega x),\sin(\omega x)$\} for feptrkn52.}
\item{\{$x^2,x^3,x^4,x^5$\} for eptrkn73.}
\item{\{$\cos(\omega x),\sin(\omega x),\cos(2\omega x),\sin(2\omega x)$\} for feptrkn73.}
\item{\{$x^2,x^3,x^4,x^5,x^6$\} for eptrkn84.}
\item{\{$x^2,\cos(\omega x),\sin(\omega x),\cos(2\omega x),\sin(2\omega x)$\} for feptrkn84.}\item{\{$x^2,x^3,x^4,x^5,x^6,x^7$\} for eptrkn95.}
\item{\{$\cos(\omega x),\sin(\omega x),\cos(2\omega x),\sin(2\omega x),\cos(3\omega x),\sin(3\omega x)$\} for feptrkn95.}
\end{enumerate}
The coefficients of FEPTRKN methods based on bases 2, 4, 6, and 8 above are independent of $t$ but they are  functions of $\omega h$. The coefficients of EPTRKN methods, i.e., methods based on bases 1, 3, 5, and 7 above, are constants. 

Methods based on bases 1 and 2 are 3-stage methods and are implemented with
$$
\bm{c} = \begin{bmatrix}
0.18677613705141
   &0.75202972313575
   &1.66119413981284
\end{bmatrix}^T. 
$$
This set of collocation points $(c_i)_{i=1}^3$ are determined to satisfy orthogonal conditions in \eqref{eq17} and the equation 
$$\int_0^2\prod_{i=1}^3(\xi-c_i)\, d\xi =0.$$ 
These methods have the same step order $p=5$ by Theorem \ref{super2} and are denoted by eptrkn52 for basis 1 and feptrkn52 for basis 2. Their embedded methods have the same step order $p=2$. 

Methods based on bases 3 and 4 are 4-stage methods. We implement these methods with 
$$
\bm{c} = \begin{bmatrix}
10027252023777 
   &0.46050359576754
   &0.86389485661306
  & 1.43247188452449
\end{bmatrix}^T
$$
which is computed to satisfy orthogonality conditions in \eqref{eq20}.
These methods share the same accuracy order $p=8$ by Theorem \ref{super3} and are denoted by eptrkn73 and feptrkn73  for basis 3 and basis 4, respectively. Their embedded methods are 3-stage methods and have the same accuracy order $p=3$. 

Methods based on bases 5 and 6 are 5-stage methods. We implement these methods with 
$$
\bm{c} = 
\begin{bmatrix}
0.0911311145011
   &0.4288524464674
   &0.8402456535427
   &1.3131095250315
   &1.8405501493461
\end{bmatrix}^T
$$
which is computed to satisfy orthogonality conditions in \eqref{eq20} and the equation
$$\int_0^2\prod_{i=1}^5(\xi-c_i)\, d\xi =0.$$ 
These methods have the same accuracy order $p=8$ by Theorem \ref{super3} and are denoted by eptrkn84 for basis 5 and feptrkn84 for basis 6. Their embedded methods are 4-stage methods and have the same accuracy order $p=4$. 

Methods based on bases 7 and 8 are 6-stage methods. These methods are implemented with 
$$
\bm{c} = 
\begin{bmatrix}
0
&0.15981788694649
   &0.47315766336506
  & 0.80767247891979
   &1
   &1.55935197076839
\end{bmatrix}^T
$$
which is computed so that orthogonality conditions in \eqref{eq20} hold and that the set $(c_i)_{i=1}^s$ contains two values 0 and 1. Having 0 and 1 in $(c_i)_{i=1}^s$ helps reducing one right-hand side function evaluation per step by reusing $f(t_{n+1},y_{n+1})$ computed from the previous step. 
These methods share the same accuracy order $p=9$ by Theorem \ref{super3} and are denoted by eptrkn95 for basis 7 and feptrkn95 for basis 8. Their embedded methods are 5-stage methods and have the same accuracy order $p=5$.

\subsection{Stability regions}

As mentioned earlier, the coefficients of FEPTRKN methods converge to those of the corresponding EPTRKN methods when $h$ goes to zero. 
Thus, the stability regions of FEPTRKN methods converge to those of the corresponding EPTRKN methods when $h$ approaches to zero. 

Using equations \eqref{stabregion}--\eqref{ae1} we plot the stability regions of the feptrkn52, feptrkn73, feptrkn84, and feptrkn95 methods with constant stepsizes, i.e., $h_n=h=const$. Figure \ref{figstab1} plots the stability regions of the feptrkn52 method for $\omega h\in [0,5]$ (left) and of the feptrkn73 method for $\omega h\in [0,4]$ (right). 
\begin{figure}
\centerline{%
\includegraphics[scale=0.85]{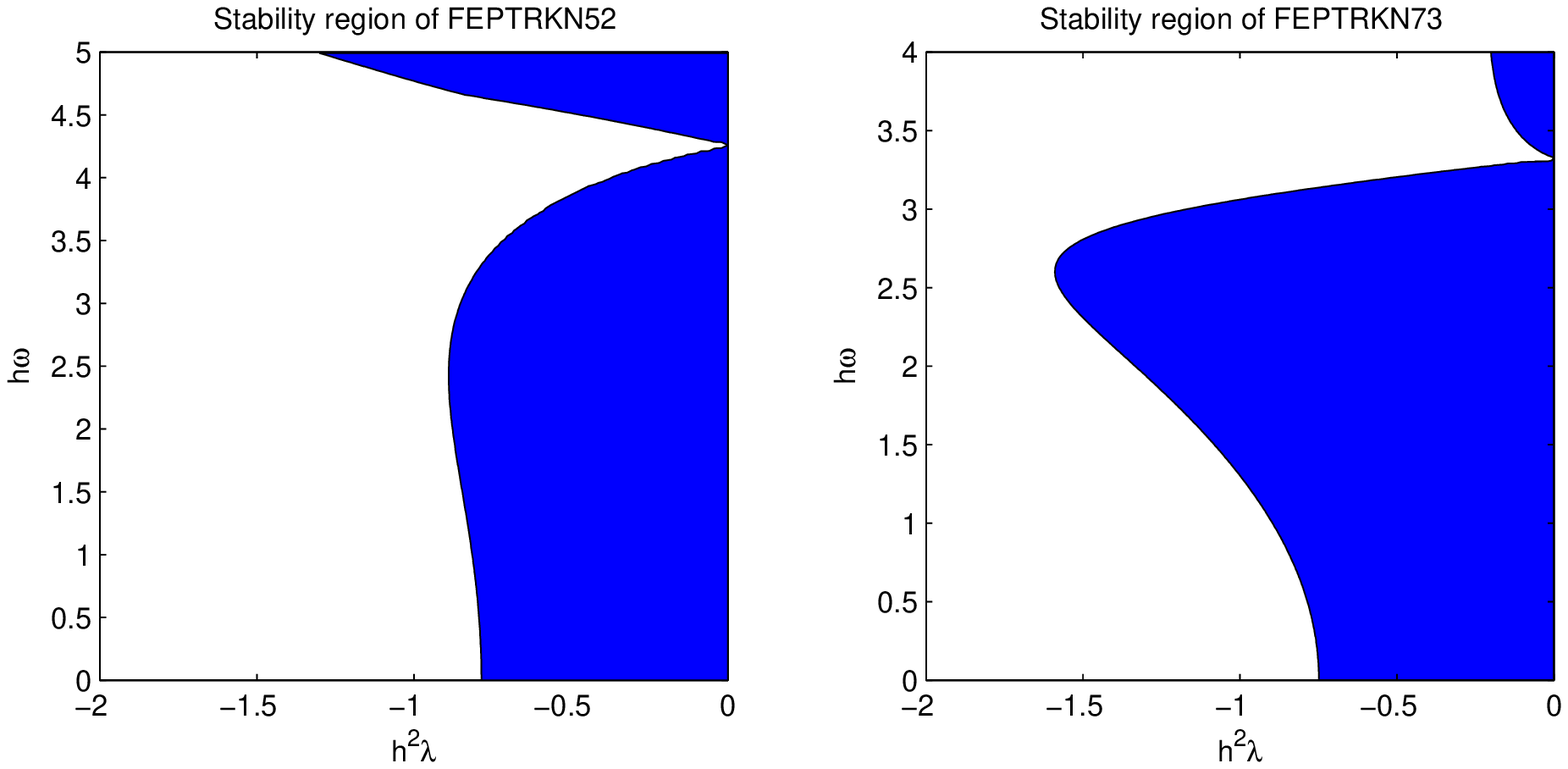}}
\caption{Plots of stability regions of the feprtkn52 and feptrkn73 methods.}
\label{figstab1}
\end{figure}
It can be seen from the left figure in Figure \ref{figstab1} that the stability region of the feptrkn52 method is greater than the stability region of the eptrkn52 method for $\omega h\in [0,3]$. 
From the right figure in Figure \ref{figstab1} one concludes that the stability region of the feptrkn74 method is larger than that of the eptrkn74 method when $\omega h\in (0,2.7]$. To have large stability regions, one should not use $\omega h>3.5$ and $\omega h>3$ for the feptrkn52 and feprtkn73 methods, respectively. 

Figure \ref{figstab2} plots the stability regions of the feptrkn84 method (left) and the feptrkn95 method (right) for $\nu=\omega h\in [0,4]$. 
\begin{figure}
\centerline{%
\includegraphics[scale=0.85]{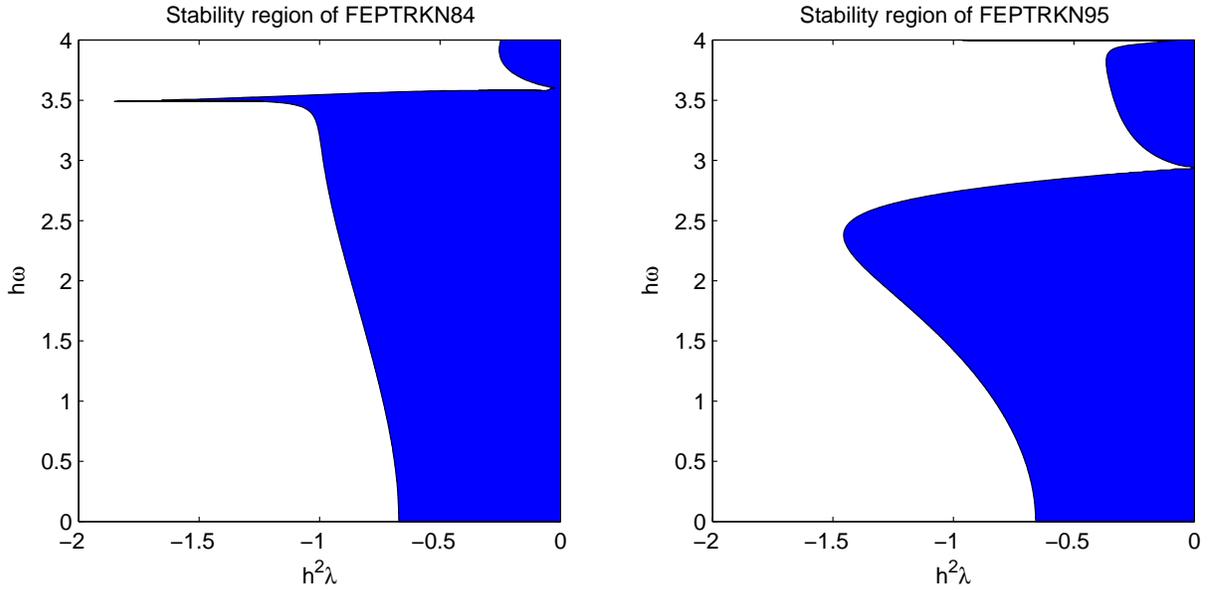}}
\caption{Plots of stability regions of the feptrkn84 and feptrkn95 methods.}
\label{figstab2}
\end{figure}
It can be seen from the left figure in Figure \ref{figstab2} that the stability region of the feptrkn84 method for $\omega h\in [0,3.4]$ is larger than that of the eptrkn84 method. 
The right figure in Figure \ref{figstab2} indicates that the stability region of the feptrkn95 method when $\omega h\in [0,2.5]$ is larger than that of the eptrkn95 method. Figure \ref{figstab2} suggests that to have large stability regions one should not use $\omega h>3.5$ and $\omega h>2.8$ for the feptrkn84 method and the feptrkn95 method, respectively.  

Although the stability regions of (F)EPTRKN methods are not as large as those of Runge-Kutta-Nystr\"{o}m methods, they are still sufficiently large for solving nonstiff problems (cf. \cite{cong99}, \cite{cong01}). 

\subsection{Test problems}

Numerical experiments are carried out with the following problems:

{\bf BETT}-Consider the system (see \cite[p.141]{simos})

\begin{align*}
y_1'' &= -y_1 + 0.001 \cos t,\qquad y_1(0) = 1,\qquad y_1'(0) = 0,\\
y_2'' &= -y_2 + 0.001 \sin t,\qquad y_2(0) = 0,\qquad y_2'(0) = 0.9995,
\end{align*}
where the integration domain is $[0,40]$. 
The solution to the problem is given by
\begin{align*}
y_1(t) = \cos t + 0.0005t\sin t,\\
y_2(t) = \sin t - 0.0005t\cos t.
\end{align*}

{\bf NEWT}-The two-body gravitational problem
\begin{align*}
y_1'' &= -\frac{y_1}{(y_1^2 + y_2^2)^{3/2}},\qquad y_1(0) = 1-e,\qquad y_1'(0) = 0, \\
y_2'' &= -\frac{y_2}{(y_1^2 + y_2^2)^{3/2}},\qquad y_2(0) = 0,\qquad y_2'(0) = \sqrt{\frac{1+e}{1-e}},
\end{align*}
whose exact solution is given by
\begin{align*}
y_1(t) &= \cos u - e,\\
y_2(t) &= \sqrt{1-e^2}\sin u,
\end{align*}
where $u$ is the solution of Keppler's equation $u=t+e\sin u$. The integration domain for this problem is $[0,20]$.

\subsection{Results and discussion}

Table \ref{table1} presents numerical results for solving the BETT problem using the four methods eptrkn52, eptrkn73, eptrkn84, and eptrkn95 implemented with constant stepsizes. The NCD values in Table \ref{table1} are computed by the formula
$$
\text{NCD} = \log_{10} \max(E_1,E_2),\qquad E_i:=\max_{0\le t_n\le T} |y_{i}^{comput}(t_n) - y_i(t_n)|.
$$
The NCD values can be used to determine accuracy order of a method in practice. In particular, if a method has an accuracy order $p$, then the error for computing $y_i(t_n)$ is of the form $Ch^p$. 
Thus, the NCD values for computing $y_i(t_n)$ is about $\log_{10}C + p\log_{10}h$. Therefore, the difference between the NCD values at stepsize $h$ and at stepsize $h/2$ is about $p\log_{10}2\approx 0.3p$. 
Thus, one can estimate $p$ the accuracy order of the method by finding differences between NCD values at stepsize $h$ and $h/2$ and then dividing these differences by $0.3$. 

By subtracting the NCD values at stepsize $h/2$ from the NCD values at stepsize $h$ and then dividing the results by $0.3$ one can see that the accuracy orders of the eptrkn52, eptrkn73, eptrkn84, and eptrkn95 methods are $p=5$, $p=7$, $p=8$, and $p=9$, respectively. This agrees with the super-convergence results in Theorems \ref{super2} and \ref{super3}. 
We also see from Table \ref{table1} that the higher accuracy order of the methods, the more accurate they are for the same stepsizes $h$. However, higher order methods use more function evaluations in each step. Thus, the data in Table \ref{table1} does not tell us which method is the best in this experiment. The main purpose of using the results in Table \ref{table1} is to verify the super-convergence results in Theorems \ref{super2} and \ref{super3}.

\begin{table}[ht] 
\caption{NCD values for the BETT problem}
\label{table1}
\centering
\small
\renewcommand{\arraystretch}{1.2}
\begin{tabular}{@{}l@{}|c@{\hspace{2mm}}|c@{\hspace{2mm}}|c@{\hspace{2mm}}|c@{\hspace{2mm}}|c@{\hspace{2mm}}|
c@{\hspace{2mm}}c@{\hspace{2mm}}|}
\hline
&$h$	&eptrkn52 & eptrkn73& eptrkn84 &eptrkn95 \\
\hline
&$1/2^1$  &-2.6  &-4.0  &-6.0  &-5.9\\ 
&$1/2^2$  &-4.1  &-6.3  &-8.2  &-8.7\\ 
&$1/2^3$  &-5.7  &-8.7  &-10.8  &-11.7\\ 
&$1/2^4$  &-7.2  &-11.1  &-13.5  &-14.6\\ 
&$1/2^5$  &-8.7  &-13.5  &-15.1  &-14.3\\ 
&$1/2^6$  &-10.2  &-15.5  &-15.7  &-14.5\\ 
&$1/2^7$  &-11.7  &-14.7  &-14.4  &-14.7\\ 
&$1/2^8$  &-13.2  &-14.3  &-14.3  &-14.4\\ 
&$1/2^9$  &-14.5  &-14.6  &-14.5  &-14.9\\ 
\hline
\end{tabular} 
\end{table}  

Table \ref{table2} presents numerical results for solving the NEWT problem with the four EPTRKN methods eptrkn52, eptrkn73, eptrkn84, and eptrkn95. Again by subtracting the NCD values at stepsize $h/2$ from the NCD values at stepsize $h$ and then dividing the obtained results by 0.3,  we can see that the accuracy orders of the eptrkn52, eptrkn73, eptrkn84, and eptrkn95 methods are $p=5$, $p=7$, $p=8$, and $p=9$, respectively.

\begin{table}[ht] 
\caption{NCD values for the NEWT problem, $e=0.01$.}
\label{table2}
\centering
\small
\renewcommand{\arraystretch}{1.2}
\begin{tabular}{@{}l@{}|c@{\hspace{2mm}}|c@{\hspace{2mm}}|c@{\hspace{2mm}}|c@{\hspace{2mm}}|c@{\hspace{2mm}}|
c@{\hspace{2mm}}c@{\hspace{2mm}}|}
\hline
&$h$	&eptrkn52 & eptrkn73& eptrkn84 &eptrkn95 \\
\hline
&$1/2$ &-0.9  &-2.2 	&-2.6 		&-2.9\\
&$1/2^2$ &-2.4  &-4.5 	&-6.2 		&-6.0\\
&$1/2^3$ &-3.9  &-6.9 	&-8.9 		&-9.2\\
&$1/2^4$ &-5.4  &-9.2 	&-11.5 	&-12.1\\
&$1/2^5$ &-6.9  &-11.5 	&-13.6    &-13.7\\
&$1/2^6$ &-8.4  &-12.6 	&-13.7    &-13.3\\
&$1/2^7$ &-9.9  &-12.8 	&-13.1    &-12.8\\
&$1/2^8$ &-11.4 &-12.9  &-13.3   & -12.6\\
&$1/2^9$ &-13.1 &-12.4  &-12.5    &-12.5\\
\hline
\end{tabular} 
\end{table}  

We also carried out similar experiments for the four FEPTRKN methods feptrkn52, feptrkn73, feptrkn83, and feptrkn95. We found out from our experiments that the accuracy orders of the feptrkn52, feptrkn73, feptrkn83, and feptrkn95 methods are $p=5$, $p=7$, $p=8$, and $p=9$, respectively. Again, this agrees with our super-convergence results in Theorems \ref{super2} and \ref{super3}. For simplicity we do not include the numerical results of these experiments in this paper. 

Figure \ref{figBETT} and Figure \ref{figNEWT} present the numerical results for solving 
the BETT and NEWT problems using the eight EPTRKN and FEPTRKN methods derived in Section \ref{derivation}. 
We also include numerical results obtained by the MATLAB function ode45. This function is a MATLAB implementation of the Runge-Kutta method DOPRI45 (see, e.g., \cite{Hairer}). 
The eight EPTRKN and FEPTRKN methods are implemented with variable stepsizes technique 
as described in Section \ref{variablestepsize}. In Figures \ref{figBETT} and \ref{figNEWT} we denote by NFE the number times evaluating $f(t_n+c_ih_n,Y_{n,i})$ of each of these methods. 
Note that the main computation cost for solving large-scale equation \eqref{sys} is evaluating the right-hand side function $f$.
The errors reported in Figures \ref{figBETT} and \ref{figNEWT} are computed as follows
$$
\text{Error} = \sqrt{\big(y^{\text{comput}}_1(t_{\text{end}}) - y_1(t_{\text{end}})\big)^2 + \big(y^{\text{comput}}_2(t_{\text{end}}) - y_2(t_{\text{end}})\big)^2}. 
$$

Figure \ref{figBETT} plots the numerical results for the BETT problem. 
The left figure in Figure \ref{figBETT} plots the numerical results obtained by the methods eptrkn52, eptrkn73, eptrkn84, eptrkn95, and the ode45 function. 
One can see from this figure that all the EPTRKN methods are better than the ode45 method. 
The right figure in Figure \ref{figBETT} plots the results obtained the eight methods derived in Section \ref{derivation} and ode45. Also, it can be seen from the right figure in Figure \ref{figBETT} that all of the FEPTRKN methods are much better than their corresponding EPTRKN methods in this experiment.

\begin{figure}
\centerline{%
\includegraphics[scale=0.85]{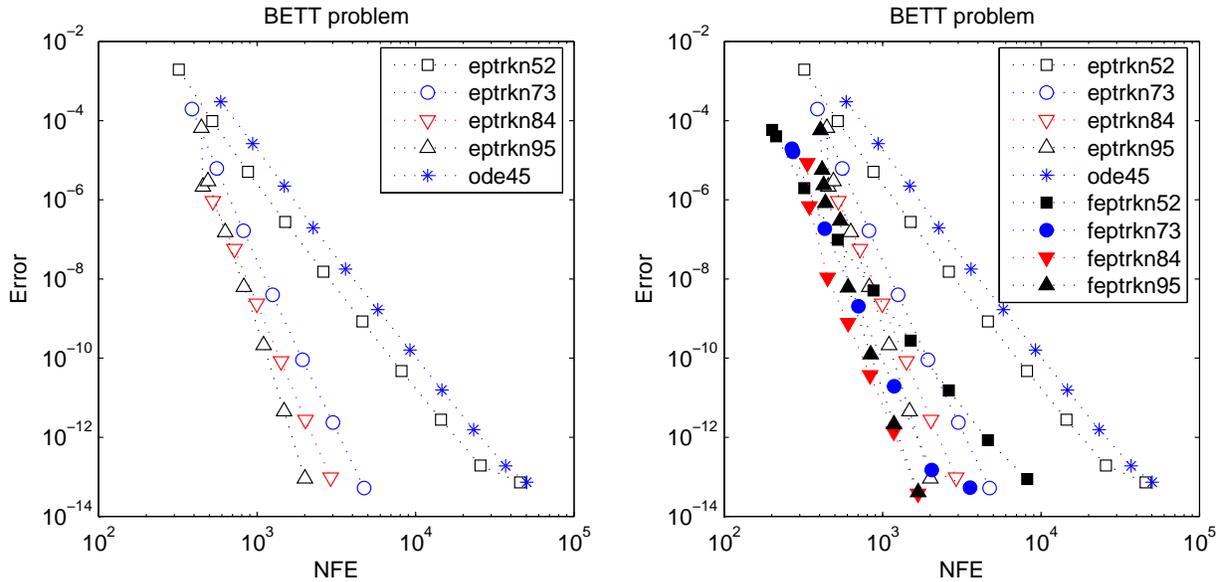}}
\caption{Numerical results for the BETT problem.}
\label{figBETT}
\end{figure}

Figure \ref{figNEWT} plots the results for the NEWT problem. 
From the left figure in Figure \ref{figNEWT} we can see that the eptrkn52 method is slightly better than the ode45 method. The other EPTRKN methods are much better than the ode45 method. The feptrkn95 method yields the best result for small TOL.  
The right figure in Figure \ref{figNEWT} plots the results for the eight methods derived in Section \ref{derivation} and the ode45 function. Again, we see that all functionally-fitted EPTRKN methods are superior to the corresponding EPTRKN methods. All of the methods derived in Section \ref{derivation} except for the eptrkn52 are much better than the ode45 method when TOL is small. 

Note that all computations here is done in sequential computing environment. 
The new methods will be much more efficient for solving large-scale nonstiff problems 
when implemented on shared memory computers as they require only one right-hand side function evaluation per step in parallel computing environments (see, e.g., \cite{cong00}). 

\begin{figure}
\centerline{%
\includegraphics[scale=0.85]{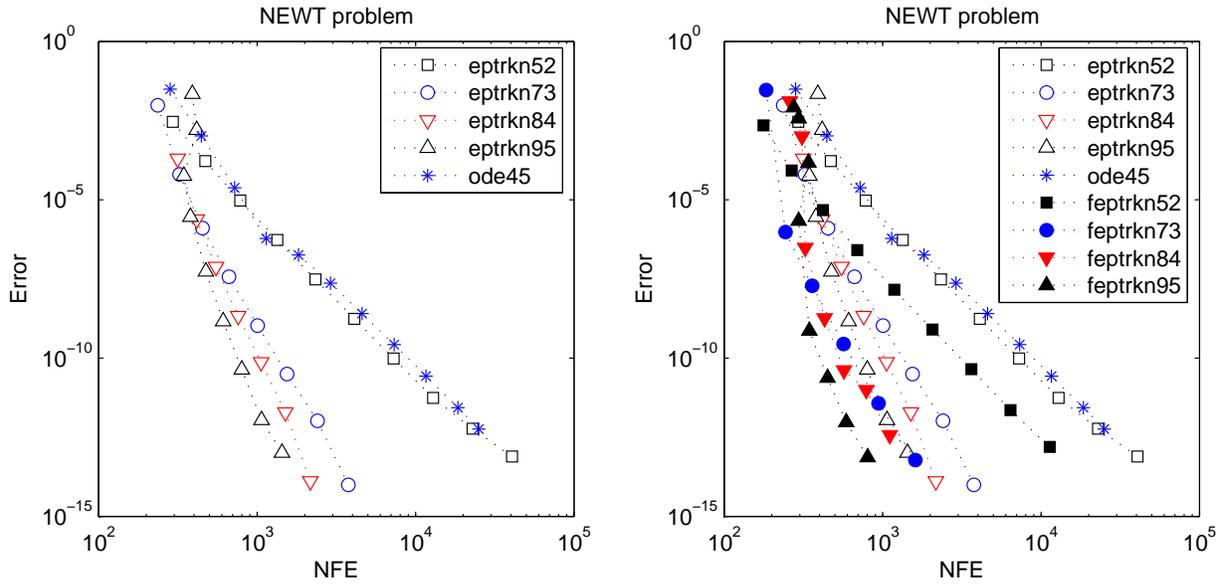}}
\caption{Numerical results for the NEWT problem when $e=0.01$.}
\label{figNEWT}
\end{figure}

\section{Concluding remarks}

A new class of functionally fitted explicit pseudo two-step RKN (FEPTRKN) methods has been developed and  studied in this paper.
The advantage of functionally fitted methods over classical methods is that we can fine-tune the choice of bases to have methods which can better capture the special properties of a particular problem that may be known in advance. 
 We have proved a new superconvergence result that allows us to construct $s$-stage (F)EPTRKN methods having accuracy orders $p=s+3$ and have shown that this new super-convergence result can be realized in practice.  
If we choose the monomials as the basis functions, then we recover EPTRKN methods. 
Moreover, we can obtain a larger class of methods 
by choosing various basis functions such as exponential functions, trigonometric polynomials, or mixed algebraic and trigonometric polynomials.
Important aspects to producing robust and efficient numerical codes, namely, variable stepsizes, error estimation via embedded methods, as well as continuous extensions are also considered. 
While it is primarily designed for parallel computers, numerical comparisons with the MATLAB function ode45  
in the paper show that the new methods can work better than one of the most commonly used methods on nonstiff problems in which accuracy (rather than stability) controls stepsizes.  
The new methods generally have a competitive advantage when the basis functions and the collocation parameters are chosen suitably. 
Since FEPTRKN methods have the structure of EPTRKN methods, they will be more efficient when implemented on parallel computers.

\end{document}